\documentclass[11pt]{article}
\usepackage[T1]{fontenc}
\usepackage{lmodern}
\usepackage{amsmath,amssymb,amsthm,mathtools}
\usepackage{graphicx}
\usepackage{xcolor}
\usepackage{cite}
\usepackage{booktabs}
\usepackage{microtype}
\usepackage[a4paper,margin=30mm]{geometry}
\usepackage[hidelinks]{hyperref}
\hypersetup{pdftitle={Almost-global convergence to twisted states in discrete Huygens rings},
 pdfauthor={Jorge Buescu, Henrique M. Oliveira}}
\usepackage{placeins}

\newtheorem{theorem}{Theorem}[section]
\newtheorem{proposition}[theorem]{Proposition}
\newtheorem{lemma}[theorem]{Lemma}
\newtheorem{corollary}[theorem]{Corollary}
\theoremstyle{remark}
\newtheorem{remark}[theorem]{Remark}

\newcommand{\ones}{\mathbf1}
\newcommand{\T}{\mathbb T}
\newcommand{\Crit}{\operatorname{Crit}}
\newcommand{\indM}{\operatorname{ind}}
\newcommand{\gradg}{\operatorname{grad}_g}

\title{Almost-global convergence to twisted states in discrete Huygens rings}
\author{Jorge Buescu$^{1}$, Henrique M. Oliveira$^{2,*}$}
\date{}

\begin{document}
\maketitle
\vspace{-1.1em}

\begin{center}
\begin{minipage}{0.90\textwidth}
\centering\small
$^{1}$ Departamento de Matem\'atica, Faculdade de Ci\^encias, Universidade de Lisboa,\\
CEMS.UL -- Center for Mathematical Studies, Campo Grande, 1749-016 Lisbon, Portugal\par
\vspace{0.3em}
$^{2}$ Centro de An\'alise Matem\'atica, Geometria e Sistemas Din\^amicos (CAMGSD),\\
Departamento de Matem\'atica, Instituto Superior T\'ecnico, Universidade de Lisboa,\\
Av.~Rovisco Pais 1, 1049-001 Lisbon, Portugal\par
\vspace{0.3em}
$^{*}$ Corresponding author: \href{mailto:henrique.m.oliveira@tecnico.ulisboa.pt}{henrique.m.oliveira@tecnico.ulisboa.pt}
\end{minipage}
\end{center}
\vspace{0.4em}

\begin{abstract}
We study the discrete Huygens phase map on the ring $C_n$, the simplest
oscillator network in which the coupling can be frustrated. Reciprocal
coupling favours phase opposition between neighbours, while the edge phase
differences must sum to zero modulo $2\pi$ around the cycle; for odd $n$
these two requirements are incompatible. The parity of $n$ determines the
absolute minima of the potential. When $4\mid n$, the critical
set also contains circles of critical points, all of which meet at two
completely degenerate configurations.

After quotienting by the common phase, the map is a gradient step for an
intrinsic potential on an $(n-1)$-torus, and the cycle spectrum gives an
explicit interval of interaction strengths on which every orbit converges to
a critical point. Our main result is that, on this whole interval and for
every $n\ge3$, almost every orbit converges to a uniform twisted minimum,
including at interaction strengths for which the map is not invertible. When
$4\mid n$, the non-minimal points of the critical circles are treated by a
centre-stable manifold argument. At the two degenerate intersections, a
local blow-up reduces the problem to the cubic leading term of the
potential and its restriction to the unit sphere. Orbits approaching along
directions where this restriction is positive form a null set, because the
cubic is harmonic in the intrinsic metric. No orbit approaches along its
zero level, where the critical points are non-degenerate saddles on
invariant rays. Hence the degenerate equilibria also have basins of
zero measure.
\end{abstract}

\vspace{0.3em}
\begin{center}
\begin{minipage}{0.90\textwidth}
\small
\textbf{2020 Mathematics Subject Classification.}
Primary 37C05, 37C70; Secondary 37C75, 05C50.\par
\smallskip
\textbf{Keywords.}
Discrete phase-coupled oscillators; reciprocal coupling; Huygens
synchronisation; Lyapunov functions; twisted states; almost-global
convergence; critical circles; blow-up; Morse index.
\end{minipage}
\end{center}

\clearpage
\section{Introduction}
\label{sec:introduction}

Consider $n$ identical phase oscillators on a closed loop, each coupled
reciprocally to its two nearest neighbours, so that the interaction graph
is the cycle $C_n$. The Huygens interaction drives neighbouring
oscillators towards phase opposition \cite{OliveiraMelo2015}, as in the
anti-phase synchronisation of pendulum clocks reported by Huygens in 1665
\cite{Huygens1665}; attractive Kuramoto coupling instead favours local
alignment \cite{WileyStrogatzGirvan2006}. On a ring, this preference
competes with closure: the edge phase differences must sum to zero
modulo $2\pi$. Even cycles admit opposition on every edge, whereas odd
cycles retain a phase mismatch. When $n$ is divisible by four,
the critical set of the potential contains, besides isolated points,
intersecting circles of critical configurations. We study the iterates of the corresponding
discrete Huygens phase map, in which the interaction strength determines
both the multipliers of the fixed points and the invertibility of the
map.

The discrete Huygens map was introduced for two clocks in
\cite{OliveiraMelo2015} and extended to three clocks with all-to-all
and aligned nearest-neighbour coupling in
\cite{DAnielloOliveira2024,BuescuDAnielloOliveira2025Aligned}.
A Lyapunov function for the aligned model was obtained in
\cite{BuescuDAnielloOliveira2025Lyapunov}, and in
\cite{BuescuOliveira2025MorseSmale} we constructed discrete Lyapunov
functions for three-clock synchronisation diffeomorphisms and proved
that they are Morse--Smale, so that phase locking persists under small
perturbations of the natural frequencies. General reciprocal phase
networks admit an intrinsic potential and a metric on the common-phase
quotient \cite{BuescuDAnielloOliveira2026General}. We specialise this
construction to cycles, where the closure constraint determines the
geometry of the critical set.

Stationary configurations of periodic cosine potentials were classified
by Mehta and Kastner \cite{MehtaKastner2011}; equilibrium counts and
non-isolated families on cycles appear in
\cite{LindbergZachariahBostonLesieutre2023,
ChenKorchevskaiaLindberg2026,Sclosa2023}, and signed-Laplacian methods
give related index information \cite{BronskiDeVilleFerguson2016}.
For the continuous-time Kuramoto equations, twisted equilibria, their
stability and winding numbers are analysed in
\cite{WileyStrogatzGirvan2006,GirnykHaslerMaistrenko2012,
DelabaysColettaJacquod2016,Ferguson2018,MedvedevWright2017,
Yagasaki2026}, basin geometry in \cite{ZhangStrogatz2021,Groisman2026},
and non-global minima of synchronisation potentials in
\cite{LingXuBandeira2019}.

Our results concern the discrete map. Let $\Lambda_n$ be the largest
eigenvalue of the cycle Laplacian. For interaction strengths
$0<h<2/\Lambda_n$ the potential decreases strictly along orbits, and
every orbit converges to a critical point. The main result,
Theorem~\ref{thm:almost-global}, states that on this whole interval and
for every $n\ge3$ almost every orbit converges to a uniform twisted
minimum, including for $h\ge1/\Lambda_n$, where the map is no longer
guaranteed to be a diffeomorphism. Here ``almost-global'' means that the
union of the basins of the locally minimising twists has full measure;
for $n\ge6$ there are several such twists, and the initial condition
determines which one is reached. For $4\nmid n$, the exclusion of
non-minimal limits follows from strict-saddle avoidance results
\cite{LeeSimchowitzJordanRecht2016,PanageasPiliouras2017,
SchaefferMcCalla2020,CheriditoJentzenRossmannek2024}; when $4\mid n$,
the same results apply along the critical circles away from their
intersections. The new step concerns the two intersections, where the
derivative of the map is the identity. There a local blow-up, based on
the harmonicity of the cubic leading term of the potential in the
intrinsic metric and on the invariance of the rays tangent to the
critical circles, shows that their basins have zero measure.

Other discrete-time phase-network models are treated in
\cite{VasylenkoMaistrenkoHasler2004,SarletteSepulchreLeonard2006,
ZhangZhu2020,Shim2020,ZhaiXiaoZheng2023,JafarianMamduhiJohansson2024,
KalmarNagyHorvathVincze2025,Wu2026}, and reviews of coupled oscillators
are given in \cite{AcebronEtAl2005,ArenasEtAl2008,DorflerBullo2014}.
Coupled oscillator networks are realised in electrical circuits and
micromechanical arrays
\cite{EnglishZengMertens2015,BhaskarShayakRandZehnder2021}, where the
collective behaviour involves relative phase patterns as well as
frequency locking, and in oscillator-based hardware
\cite{MoyEtAl2022,LoEtAl2023}.

Section~\ref{sec:model} introduces the quotient dynamics and the
finite-step estimates. Section~\ref{sec:critical} classifies the
critical set and its Morse indices. Section~\ref{sec:attractors}
identifies the attracting twists and shows that strict saddles attract
only a null set. Section~\ref{sec:parabolic} treats the degenerate
intersections and proves the main theorem. Section~\ref{sec:examples}
describes the rings $C_3$ to $C_8$, and Section~\ref{sec:discussion}
concludes. Appendix~\ref{app:slow-dichotomy} proves a slow-time
dichotomy lemma used in Section~\ref{sec:parabolic}.

\section{The ring and its quotient dynamics}
\label{sec:model}

\subsection{The phase map and intrinsic potential}

Let $n\ge3$ and consider $n$ identical phase oscillators with phases
$\theta_i\in\T=\mathbb R/(2\pi\mathbb Z)$, coupled along the
unweighted cycle $C_n$; indices are read modulo $n$. For interaction
strength $h>0$, the full map $F_h:\T^{n}\to\T^{n}$ is
\begin{equation}
\label{eq:fullmap}
 (F_h(\theta))_i
 =\theta_i+h\bigl[\sin(\theta_i-\theta_{i-1})
                  +\sin(\theta_i-\theta_{i+1})\bigr]
 \pmod{2\pi},\qquad i=1,\ldots,n.
\end{equation}
Its intrinsic discrete potential is
\begin{equation*}
 \mathcal V(\theta)=\sum_{i=1}^{n}\cos(\theta_i-\theta_{i-1}),
\end{equation*}
and direct differentiation gives
\begin{equation*}
 F_h(\theta)=\theta-h\nabla\mathcal V(\theta)\pmod{2\pi}.
\end{equation*}
The common phase advance has been removed by working in a co-rotating
frame.

\subsection{Edge differences and the reduced metric}
\label{subsec:quotient}

Write $\ones=(1,\ldots,1)^T\in\mathbb R^{n}$, let $e_i$ be the
standard basis vectors, and let $B$ be the incidence matrix with
columns $b_i=e_i-e_{i-1}$. The edge differences are
\begin{equation*}
 z=B^T\theta,\qquad z_i=\theta_i-\theta_{i-1}\pmod{2\pi}.
\end{equation*}
The orientation only fixes signs; the coupling is undirected. The edge
differences satisfy $\ones^Tz\equiv0\pmod{2\pi}$, so they lie in the
$(n-1)$-dimensional torus
\begin{equation*}
 \mathcal Z=\{z\in\T^{n}:\ones^Tz\equiv0\pmod{2\pi}\}.
\end{equation*}
Two configurations have the same edge differences exactly when they
differ by a common phase, and every $z\in\mathcal Z$ arises in this way:
once one phase is fixed, the others are recovered successively, and
closure makes the last edge consistent. Thus $\mathcal Z$ is the
quotient of $\T^n$ by common-phase shifts, described by $n$ edge
coordinates subject to one constraint. We denote by $\mu$ the
normalised Lebesgue measure on $\mathcal Z$.

A \emph{uniform $q$-twisted state}, or \emph{$q$-twist}, has the same
difference on every edge:
\begin{equation}
\label{eq:uniformtwist}
 z=\frac{2\pi q}{n}\ones,\qquad q=0,\ldots,n-1.
\end{equation}
Negating the phases exchanges $q$ and $n-q$ modulo $n$.  If $n$
is even, $q=n/2$ is the state with phase opposition on every edge.

Define the edge potential and its restriction by
\begin{equation*}
 \Psi(z)=\sum_{i=1}^{n}\cos z_i,\qquad V=\Psi|_{\mathcal Z}.
\end{equation*}
Then $\mathcal V(\theta)=\Psi(B^T\theta)$. For this cyclic ordering,
the edge Laplacian $K=B^TB$ has the same entries as the vertex Laplacian
$BB^T$: $2$ on the diagonal and $-1$ at cyclic neighbours. Since
$F_h(\theta)=\theta+hB\sin(B^T\theta)$, the reduced map is
\begin{equation*}
 \bar F_h(z)=z+hK\sin z\pmod{2\pi},\qquad
 B^T\circ F_h=\bar F_h\circ B^T,
\end{equation*}
where sine acts componentwise; it preserves closure because
$\ones^TK=0$. In coordinates,
\begin{equation*}
 (\bar F_h(z))_i=z_i+h(2\sin z_i-\sin z_{i-1}-\sin z_{i+1})
 \pmod{2\pi}.
\end{equation*}

The tangent space of $\mathcal Z$ is $E=\ones^\perp$, with orthogonal
projection $P_E=I-\ones\ones^T/n$, and $K_0=K|_E$ is positive definite.
The flat quotient metric on $E$ is
\begin{equation}
\label{eq:metric}
 g(u,v)=u^TK_0^{-1}v,\qquad u,v\in E,
\end{equation}
with norm $\|u\|_g^2=g(u,u)$. It is induced by the Euclidean norm of
vertex displacements: the zero-mean vertex displacement producing the
edge displacement $u\in E$ is $(B^T|_E)^{-1}u$, and
$\|(B^T|_E)^{-1}u\|^2=u^TK_0^{-1}u$. The $g$-gradient is
\begin{equation*}
 \gradg V(z)=-K\sin z,
\end{equation*}
so the reduced map has the same gradient form,
\begin{equation}
\label{eq:gradientreduced}
 \bar F_h(z)=z-h\gradg V(z)\pmod{2\pi}.
\end{equation}

\subsection{Spectrum and finite-step estimates}
\label{subsec:spectrum}

The eigenvalues of $K$ are
\begin{equation}
\label{eq:spectrum}
 \lambda_j=2-2\cos\frac{2\pi j}{n},\qquad j=0,\ldots,n-1,
\end{equation}
with $\lambda_0=0$ corresponding to $\ones$.  Hence
\begin{equation*}
 \Lambda_{n}:=\lambda_{\max}(K)=
 \begin{cases}
 2+2\cos\dfrac{\pi}{n},&n\text{ odd},\\[3pt]
 4,&n\text{ even}.
 \end{cases}
\end{equation*}
Write $D(z)=\operatorname{diag}(\cos z_1,\ldots,\cos z_{n})$.
The ambient and reduced Hessians are
\begin{equation*}
 D^2\Psi(z)=-D(z),\qquad H_z=-P_ED(z)|_E.
\end{equation*}
Both have Euclidean operator norm at most one.  We write $\Crit(V)$ for
the critical set.  At a non-degenerate critical point, the Morse index
$\indM$ is the number of negative eigenvalues of $H_z$.

Since $\ker K=\operatorname{span}\{\ones\}$, the critical points satisfy
\begin{equation}
\label{eq:critcondition}
 K\sin z=0
 \quad\Longleftrightarrow\quad
 \sin z_1=\cdots=\sin z_{n}.
\end{equation}

We call
\begin{equation}
\label{eq:lyaprange}
 0<h<\frac{2}{\Lambda_{n}}
\end{equation}
the Lyapunov interval, on which
$c_h:=h\left(1-h\Lambda_{n}/2\right)>0$, and
\begin{equation}
\label{eq:diffeorange}
 0<h<\frac1{\Lambda_{n}}
\end{equation}
the invertibility interval.

\begin{theorem}[Descent and invertibility]
\label{thm:descent-invertibility}
Under \eqref{eq:lyaprange},
\begin{equation}
\label{eq:lyapineq}
 V(\bar F_h(z))\le V(z)-c_h(\sin z)^TK\sin z,
\end{equation}
so $V(\bar F_h(z))<V(z)$ for $z\notin\Crit(V)$. Under
\eqref{eq:diffeorange}, $\bar F_h$ is a diffeomorphism of $\mathcal Z$.
\end{theorem}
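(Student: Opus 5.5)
The plan has two parts: a Taylor estimate for the descent inequality and a strong-monotonicity argument for invertibility.

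\textbf{Descent.} Lift $z$ to a representative in $\mathbb R^n$ with $\ones^Tz$ a multiple of $2\pi$. Then $\Psi$ is $2\pi$-periodic in each coordinate and $V(\bar F_h(z))=\Psi(z+hK\sin z)$ with no reduction issues. Set $d=hK\sin z\in E$ and apply Taylor's theorem with integral remainder to $\Psi$ along the segment from $z$ to $z+d$:
\begin{equation*}
 \Psi(z+d)=\Psi(z)+\nabla\Psi(z)^Td+\int_0^1(1-t)\,d^TD^2\Psi(z+td)\,d\,dt.
\end{equation*}
Since $\nabla\Psi(z)=-\sin z$, the linear term is $-h(\sin z)^TK\sin z$. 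Because $D^2\Psi=-D$ has Euclidean operator norm at most one, the remainder is at most $\tfrac12\|d\|^2=\tfrac{h^2}{2}(\sin z)^TK^2\sin z$. The matrix $K$ is symmetric with spectrum in $[0,\Lambda_n]$, so $K^2\preceq\Lambda_nK$. Combining these bounds gives
\begin{equation*}
 V(\bar F_h(z))\le V(z)-h\Bigl(1-\tfrac{h\Lambda_n}{2}\Bigr)(\sin z)^TK\sin z,
\end{equation*}
which is \eqref{eq:lyapineq}. By \eqref{eq:critcondition}, $(\sin z)^TK\sin z=0$ exactly when $\sin z\in\ker K$, that is, exactly when $z\in\Crit(V)$. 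Since $c_h>0$ on the Lyapunov interval, the decrease is strict off $\Crit(V)$.

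\textbf{Invertibility.} I lift $\bar F_h$ to the affine subspace $\tilde{\mathcal Z}=\{z\in\mathbb R^n:\ones^Tz\in2\pi\mathbb Z\}$, whose components are translates of $E$. On each component the lift is $G(z)=z+hK\sin z$. It commutes with translations by the lattice $2\pi\mathbb Z^n\cap E$, and also with translations by $2\pi e_i$, which move between components. The derivative is $DG=I+hK D(z)$ restricted to $E$.

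\begin{enumerate}
\item \emph{Local invertibility.} For $u\in E$, $\|hK D u\|\le h\Lambda_n\|u\|<\|u\|$ when $h<1/\Lambda_n$. Hence $DG$ is invertible, so $\bar F_h$ is a local diffeomorphism.
\item \emph{Global injectivity of the lift.} If $G(z)=G(z')$ with $z-z'\in E$, then $z-z'=-hK(\sin z-\sin z')$. Since $\sin$ is $1$-Lipschitz componentwise, $\|z-z'\|\le h\Lambda_n\|z-z'\|$, which forces $z=z'$. Thus $G$ is injective on each component.
\item \emph{Surjectivity.} $G-\mathrm{id}$ is bounded, so $G$ is proper. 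A proper local diffeomorphism of the connected, simply connected space $E$ is a covering map, hence a diffeomorphism. Alternatively, for each target $w$ the map $z\mapsto w-hK\sin z$ is a contraction on the component, so Banach's theorem gives a preimage directly.
\end{enumerate}

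\textbf{Main obstacle.} The step needing the most care is passing from a bijective equivariant lift to a bijection of the torus $\mathcal Z$. The lift $G$ commutes with every deck translation $2\pi\mathbb Z^n\cap E$. Now suppose $\bar F_h(z)=\bar F_h(z')$ on $\mathcal Z$. Then $G(z)=G(z')+\gamma$ for some deck translation $\gamma$, so $G(z)=G(z'+\gamma)$, and injectivity gives $z=z'+\gamma$, so the two points coincide on the torus. The map $\bar F_h$ is therefore a bijective local diffeomorphism of the compact torus, hence a diffeomorphism. For $h\ge1/\Lambda_n$ the Lipschitz bound in step 2 fails, which is consistent with the statement restricting invertibility to \eqref{eq:diffeorange}.
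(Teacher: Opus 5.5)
Your proof is correct. The descent half is the paper's argument (Taylor with $\|D^2\Psi\|\le 1$, then $K^2\preceq\Lambda_nK$); the route differs in the global invertibility.

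The paper uses $h\Lambda_n<1$ only pointwise, to make $J_h$ nonvanishing, and then argues topologically. The map $\bar F_h$ is homotopic to the identity through $z\mapsto z+thK\sin z$, so it has degree one. A local diffeomorphism of the compact connected torus is a finite covering with $|\deg|$ sheets, hence it is one-sheeted.

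You use the same bound globally. Because $\|K\|_2=\Lambda_n$ and $\sin$ is $1$-Lipschitz, $z\mapsto w-hK\sin z$ is a contraction of the level set $\{\ones^Tz=\ones^Tw\}$. So the lift $G$ is a bijection of each level; this single Banach step already covers your items~2 and~3. You then descend via the commutation of $G$ with $2\pi\mathbb Z^n$. Working only on the component $E$, which already covers $\mathcal Z$ with deck group $2\pi\mathbb Z^n\cap E$, keeps that bookkeeping clean.

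Your route is more elementary, since it avoids degree theory and covering-space facts. It also gives a quantitative bonus: the inverse of the lift is Lipschitz with constant $(1-h\Lambda_n)^{-1}$.

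The topological route needs only that $J_h$ never vanishes, together with the homotopy to the identity. It would therefore still work wherever the derivative stays invertible but no global contraction estimate is available. Here both hypotheses come from the same inequality $h\Lambda_n<1$, so neither route loses anything.
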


\begin{proof}
Taylor's inequality for $\Psi$, whose Hessian has norm at most one,
applied to the increment $hK\sin z$, gives
\[
 V(\bar F_h(z))\le V(z)-h(\sin z)^TK\sin z
             +\frac{h^2}{2}(\sin z)^TK^2\sin z.
\]
Since $K^2\le\Lambda_{n}K$ as quadratic forms, this proves
\eqref{eq:lyapineq}. Since $c_h>0$, the correction term vanishes exactly
when $K\sin z=0$, that is, on $\Crit(V)$ by \eqref{eq:critcondition}.

For \eqref{eq:diffeorange}, the derivative on $E$ is
\[
 D\bar F_h(z)|_E=I+hKD(z)|_E.
\]
Since $\|KD(z)|_E\|_2\le\Lambda_{n}$, it is invertible when
$h\Lambda_{n}<1$. Hence every map of the homotopy
$z\mapsto z+thK\sin z$, $0\le t\le1$, is a local diffeomorphism, and
$\bar F_h$ has degree one.  A local diffeomorphism of the compact
connected torus $\mathcal Z$ is a covering map; degree one makes it
one-sheeted and hence a global diffeomorphism.
\end{proof}

\begin{proposition}[Fixed points and convergence]
\label{prop:global}
Under \eqref{eq:lyaprange}, $\operatorname{Fix}(\bar F_h)=\Crit(V)$,
every periodic point is fixed, and every forward orbit converges to a
single critical point.
\end{proposition}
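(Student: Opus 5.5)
Fixed points are handled first, then periodic points, then convergence; the whole argument rests on the descent inequality \eqref{eq:lyapineq} of Theorem~\ref{thm:descent-invertibility}. If $z\in\Crit(V)$, then $K\sin z=0$ by \eqref{eq:critcondition}, so $\bar F_h(z)=z$. Conversely, if $\bar F_h(z)=z$ and $z\notin\Crit(V)$, then \eqref{eq:lyapineq} gives $V(z)=V(\bar F_h(z))<V(z)$, which is impossible. Hence $\operatorname{Fix}(\bar F_h)=\Crit(V)$. For a point of period $p$, apply the strict inequality along the orbit:
\[
V(z)=V(\bar F_h^p(z))\le V(z)-c_h\sum_{k=0}^{p-1}(\sin z_k)^TK\sin z_k ,
\]
where $z_k=\bar F_h^k(z)$. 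Each summand is nonnegative, so all of them vanish. Since $K\ge0$ with kernel spanned by $\ones$, this forces $K\sin z_0=0$, so $z$ is critical and therefore fixed.

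For convergence, fix an orbit $z_k$. The sequence $V(z_k)$ is nonincreasing and bounded below, since $|V|\le n$. Summing \eqref{eq:lyapineq} gives $\sum_k(\sin z_k)^TK\sin z_k<\infty$, so $K\sin z_k\to0$. The step size $\|z_{k+1}-z_k\|=h\|K\sin z_k\|$ therefore tends to zero, and the $\omega$-limit set $\omega$ is nonempty, compact, connected, and contained in $\Crit(V)$. By continuity, $V$ is constant on $\omega$, equal to $\lim_k V(z_k)$. If $\omega$ is a single point, the orbit converges to it and we are done.

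The main obstacle is the case $4\mid n$, where $\Crit(V)$ contains circles, so a connected $\omega$ could in principle be a nontrivial arc of critical points. First I will describe $\Crit(V)$ explicitly. The condition $\sin z_i=s$ for all $i$ gives $z_i\in\{\alpha,\pi-\alpha\}$, subject to the closure condition $m\alpha+(n-m)(\pi-\alpha)\equiv0$. This makes $\Crit(V)$ a real-analytic set: finitely many points together with finitely many circles. Along a circle, $V=\sum\cos z_i=(2m-n)\cos\alpha$ is constant only if $2m=n$. I then plan a {\L}ojasiewicz argument. $V$ is real analytic on $\mathcal Z$, so near any point $z^*\in\omega$ there are $\theta\in(0,1/2]$ and $C>0$ with $|V(z)-V(z^*)|^{1-\theta}\le C\|\gradg V(z)\|$. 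Together with \eqref{eq:lyapineq}, the equivalence of $\|K\sin z\|$ with $\|\gradg V\|_g$ (both come from $K_0$), and the step size $h\|K\sin z_k\|$, this yields
\[
\|z_{k+1}-z_k\|\le C'\bigl[(V(z_k)-V_\infty)^\theta-(V(z_{k+1})-V_\infty)^\theta\bigr]
\]
once the orbit stays near $z^*$, where $V_\infty=\lim_kV(z_k)$. The standard argument of Absil--Mahony--Andrews for discrete gradient descent with a sufficient-decrease condition then shows that the orbit has finite length. Consequently $z_k$ is Cauchy and converges to a single point of $\Crit(V)$. The delicate points are the bookkeeping that the orbit does not leave the {\L}ojasiewicz neighbourhood before the telescoping bound takes over, and the comparison of $\|K\sin z\|^2$ with $(\sin z)^TK\sin z$; both are handled by $K\le\Lambda_nI$ and $K_0\ge\lambda_1 I$ on $E$.
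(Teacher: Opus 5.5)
Your proof is correct and follows the paper's route: strict descent handles fixed and periodic points, and the \L{}ojasiewicz gradient inequality combined with the sufficient-decrease estimate shows the orbit has finite length and hence converges. The only difference is bookkeeping. The paper covers the compact set $\omega$ by finitely many \L{}ojasiewicz neighbourhoods to obtain one inequality valid along the whole tail, which replaces your single-point trapping argument, and neither argument needs your explicit description of the critical circles.
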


\begin{proof}
Critical points are fixed by \eqref{eq:gradientreduced}.  Conversely, a
fixed point leaves $V$ unchanged, so Theorem~\ref{thm:descent-invertibility}
forces criticality.  The same strict decrease excludes every non-fixed
periodic orbit.

Write $z_m=\bar F_h^{\,m}(z_0)$.  The descent estimate implies
$(\sin z_m)^TK\sin z_m\to0$.  Hence
$\operatorname{dist}(z_m,\Crit(V))\to0$, every accumulation point is
critical, and all accumulation points have the common potential value
$V_\infty=\lim_mV(z_m)$.  In the reduced metric,
\begin{equation*}
 \|\gradg V(z)\|_g^2=(\sin z)^TK\sin z,
\end{equation*}
and the length of the update is
\begin{equation*}
 \delta_m=\|hK\sin z_m\|_g
         =h\|\gradg V(z_m)\|_g.
\end{equation*}
The distance between successive points of $\mathcal Z$ is at most
$\delta_m$.

Let $\omega$ be the set of accumulation points of $(z_m)$. It is
nonempty and compact, $\operatorname{dist}(z_m,\omega)\to0$, and by
the above $\omega\subset\Crit(V)\cap V^{-1}(V_\infty)$. Near each
$p\in\omega$, analyticity gives the \L{}ojasiewicz gradient inequality
\begin{equation*}
 |V(z)-V_\infty|^{1-\vartheta_p}
 \le C_p\|\gradg V(z)\|_g,
 \qquad C_p>0,\quad 0<\vartheta_p\le\tfrac12,
\end{equation*}
on a neighbourhood $U_p$ of $p$; see \cite{AbsilMahonyAndrews2005}.
By compactness, finitely many of these neighbourhoods,
$U_{p_1},\dots,U_{p_N}$, cover $\omega$; their union $U$ is a
neighbourhood of $\omega$, so $z_m\in U$ for all large $m$. With
$\vartheta=\min_i\vartheta_{p_i}$ and $C=\max_iC_{p_i}$, the inequality
\begin{equation*}
 |V(z)-V_\infty|^{1-\vartheta}\le C\|\gradg V(z)\|_g
\end{equation*}
holds on $U\cap\{|V-V_\infty|\le1\}$, since lowering the exponent can
only decrease the left-hand side there. It therefore holds along the
tail of the orbit. Put $e_m=V(z_m)-V_\infty$.  If some
$e_m=0$, strict descent makes the orbit stationary.  Otherwise, for all
large $m$, $e_m<1$ and concavity of $t^\vartheta$ gives
\[
 e_m^\vartheta-e_{m+1}^\vartheta
 \ge\vartheta e_m^{\vartheta-1}(e_m-e_{m+1})
 \ge\frac{\vartheta(1-h\Lambda_{n}/2)}{C}\,\delta_m.
\]
Thus $\sum_m\delta_m<\infty$.  The orbit is Cauchy and therefore
converges to one critical point.
\end{proof}

\begin{proposition}[Hyperbolicity and Morse index]
\label{prop:hyperbolicity}
Under \eqref{eq:lyaprange}, every non-degenerate critical point $p$ of
$V$ is a hyperbolic fixed point of $\bar F_h$ with $\indM(p)$
multipliers outside the unit circle.
\end{proposition}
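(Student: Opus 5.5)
The plan is to compute the derivative of $\bar F_h$ at $p$ on the tangent space $E$ and relate it to the reduced Hessian through the metric $g$. From the proof of Theorem~\ref{thm:descent-invertibility}, $D\bar F_h(p)|_E=I+hKD(p)|_E$. Since $\ones^TK=0$, $K$ maps into $E$, and for $u\in E$ we have $KD(p)u=KP_ED(p)u$. Indeed, $K\ones=0$ kills the component of $D(p)u$ along $\ones$. Writing $H=H_p=-P_ED(p)|_E$, the derivative becomes
\[
 A:=D\bar F_h(p)|_E=I-hK_0H .
\]
This is the familiar form $I-h\,\mathrm{Hess}_g V$, because $K_0H$ is the $g$-self-adjoint Hessian operator associated with the Euclidean-symmetric form $H$.

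The next step is to diagonalise $K_0H$. Conjugating by $K_0^{1/2}$ gives
\[
 K_0^{-1/2}(K_0H)K_0^{1/2}=K_0^{1/2}HK_0^{1/2},
\]
which is symmetric on $E$. So $K_0H$ has real eigenvalues $\mu_1,\dots,\mu_{n-1}$ and is diagonalisable. By Sylvester's law of inertia, $K_0^{1/2}HK_0^{1/2}$ has the same inertia as $H$. Non-degeneracy of $p$ gives $\mu_k\neq0$ for all $k$, and exactly $\indM(p)$ of the $\mu_k$ are negative. The multipliers of $\bar F_h$ at $p$ are therefore $1-h\mu_k$.

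To locate the multipliers, bound $\mu_k$ using the Rayleigh quotient of the symmetric matrix $K_0^{1/2}HK_0^{1/2}$. Since $\|H\|_2\le1$ and $\|K_0\|_2=\Lambda_n$,
\[
 |\mu_k|\le\|K_0^{1/2}\|_2^2\,\|H\|_2\le\Lambda_n .
\]
The multipliers are then located as follows.
\begin{itemize}
 \item If $\mu_k<0$, then $1-h\mu_k>1$, so the multiplier lies outside the unit circle.
 \item If $0<\mu_k\le\Lambda_n$, then $1-h\Lambda_n\le 1-h\mu_k<1$. The Lyapunov interval \eqref{eq:lyaprange} gives $h\Lambda_n<2$, hence $1-h\Lambda_n>-1$, so the multiplier lies strictly inside the unit circle.
\end{itemize}
No multiplier has modulus one, so $p$ is hyperbolic, and exactly $\indM(p)$ multipliers lie outside the unit circle.

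The only delicate point is the first step: checking that the projection $P_E$ and the restriction to $E$ interact correctly with $K$. This is what makes the derivative exactly $I-hK_0H$ rather than $I-hK_0$ times some other restriction of $D(p)$. Once that identity is verified, the rest is linear algebra.
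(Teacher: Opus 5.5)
Your proposal is correct and follows the paper's proof essentially step for step. Both arguments use the derivative $I-hK_0H_p$, the similarity of $K_0H_p$ to the symmetric matrix $K_0^{1/2}H_pK_0^{1/2}$ (so the spectrum is real with the inertia of $H_p$), the bound $|\eta|\le\Lambda_{n}$, and the location of the multipliers $1-h\eta$ under $h\Lambda_{n}<2$; you additionally justify $KD(p)u=KP_ED(p)u$ for $u\in E$, which the paper leaves implicit.
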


\begin{proof}
At a critical point $p$,
\begin{equation}
\label{eq:linearisation}
 D\bar F_h(p)|_E=I-hK_0H_p.
\end{equation}
The matrix $K_0H_p$ is similar to the symmetric matrix
$K_0^{1/2}H_pK_0^{1/2}$ and has the same inertia as $H_p$.  Its
eigenvalues $\eta$ are real and satisfy $|\eta|\le\Lambda_{n}$.  Under
\eqref{eq:lyaprange}, positive $\eta$ give $|1-h\eta|<1$, whereas negative
$\eta$ give $1-h\eta>1$.  Non-degeneracy excludes $\eta=0$.
\end{proof}

Let \(\widetilde F_h:\mathbb R^n\to\mathbb R^n\) denote the lift of
\eqref{eq:fullmap} obtained by omitting reduction modulo \(2\pi\).  Since
\(\ones^TB=0\), it preserves the phase sum:
\begin{equation}
\label{eq:phasesum}
 \ones^T\widetilde F_h(\theta)=\ones^T\theta.
\end{equation}

\begin{corollary}[Convergence of the full phase map]
\label{cor:full-convergence}
Under \eqref{eq:lyaprange}, every orbit of \(F_h\) converges to a
fixed point.
\end{corollary}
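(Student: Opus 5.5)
The plan is to decompose the full dynamics into the reduced edge-difference dynamics plus a scalar common-phase component, and to show that the latter is determined by the former once we work on the lift. Take $\theta_0\in\mathbb R^n$ a lift of the initial condition and set $\theta_m=\widetilde F_h^{\,m}(\theta_0)$. By \eqref{eq:phasesum}, the mean $\bar\theta=\ones^T\theta_m/n$ is constant along the orbit. The zero-mean part $\theta_m-\bar\theta\ones$ is then what we must control. Its increments are $\theta_{m+1}-\theta_m=hB\sin(B^T\theta_m)$, and these depend only on the edge differences $z_m=B^T\theta_m \pmod{2\pi}$, which form the orbit of $\bar F_h$ starting from $z_0$.

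The key estimate is a bound on the Euclidean size of these increments. The increment $hB\sin z_m$ is a zero-mean vertex displacement. By the discussion following \eqref{eq:metric}, its Euclidean norm equals the $g$-norm of the corresponding edge displacement $B^T hB\sin z_m=hK\sin z_m$, that is, exactly $\delta_m$ in the notation of the proof of Proposition~\ref{prop:global}. That proof established $\sum_m\delta_m<\infty$; in the case where some $e_m=0$ the orbit is eventually stationary, so the sum is trivially finite. Hence $\sum_m\|\theta_{m+1}-\theta_m\|<\infty$. The lifted sequence $(\theta_m)$ is therefore Cauchy in $\mathbb R^n$ and converges to some $\theta_\infty$. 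Projecting modulo $2\pi$ shows that the $F_h$-orbit converges to $\theta_\infty \bmod 2\pi$.

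It remains to check that the limit is fixed. Since $\widetilde F_h$ is continuous, passing to the limit in $\theta_{m+1}=\widetilde F_h(\theta_m)$ gives $\widetilde F_h(\theta_\infty)=\theta_\infty$, so the projection is a fixed point of $F_h$. Equivalently, $B\sin(B^T\theta_\infty)=0$, which says that $B^T\theta_\infty$ is a critical point of $V$, consistent with Proposition~\ref{prop:global}.

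The only delicate step is identifying the vertex increments with $\delta_m$. This requires checking that $hB\sin z_m$ has zero mean, which holds because $\ones^TB=0$. It also requires checking that it is the unique zero-mean preimage of $hK\sin z_m$ under $B^T$, which holds because $B^T|_E$ is injective. Mere convergence of $z_m$ would not suffice here: without summability of the increments, the common-phase component could drift. The summability inherited from the \L{}ojasiewicz argument is precisely what rules this out.
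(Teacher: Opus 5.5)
Your proof is correct, but it takes a different route from the paper's. The paper uses only the \emph{conclusion} of Proposition~\ref{prop:global}, that the reduced orbit converges to some critical $z^*$. Every accumulation point of the full orbit then lies on the common-phase circle over $z^*$. The conserved quantity $\exp(i\ones^T\theta)$ from \eqref{eq:phasesum} meets that circle in finitely many points. The steps $hB\sin(B^T\theta_m)$ tend to zero because $\sin z^*\in\operatorname{span}\{\ones\}$ and $B\ones=0$. A sequence in a compact metric space with finite accumulation set and vanishing steps converges.

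You instead take the finite-length estimate $\sum_m\delta_m<\infty$ from inside the proof of Proposition~\ref{prop:global}. You transfer it to the vertex lift through the identity $\|hB\sin z\|^2=h^2(\sin z)^TK\sin z=\delta_m^2$, so the lifted orbit is Cauchy in $\mathbb R^n$ immediately. Your version is shorter and quantitative: it gives finite arc length of the full orbit. The tail $\sum_{j\ge m}\delta_j\lesssim e_m^\vartheta$ bounds the distance to the limit, so the full orbit converges at the \L{}ojasiewicz rate of the reduced one. The price is that it relies on an internal estimate of that proof rather than its statement. The paper's argument would survive any other proof of reduced convergence.

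One correction to your last paragraph. Mere convergence of $z_m$ \emph{does} suffice, and the paper's proof shows exactly this. The common-phase component cannot drift, because the phase sum is exactly conserved, as you note yourself at the start. The finitely many admissible points on the fibre, combined with vanishing steps, then force convergence. Summability is a convenient sufficient condition here, not a necessary one.
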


\begin{proof}
By Proposition~\ref{prop:global}, the edge differences converge to a
critical configuration.  Every accumulation point of the full orbit
therefore lies on the corresponding common-phase fibre.  The quantity
\(\exp(i\ones^T\theta)\) is constant by \eqref{eq:phasesum}; its level set
meets that fibre in only finitely many points.  Along a lifted orbit $x_{m+1}=\widetilde F_h(x_m)$,
$\widetilde F_h(x_m)-x_m=hB\sin(B^Tx_m)\to0$, since
$\sin z\in\operatorname{span}\{\ones\}$ at each reduced limit
and $B\ones=0$.
A sequence on a compact metric space with a finite accumulation set and
successive distances tending to zero converges to one of its accumulation
points.
\end{proof}

Define
\begin{equation*}
 J_h(z):=\det\bigl(D\bar F_h(z)|_E\bigr).
\end{equation*}

\begin{proposition}[Null preimages]
\label{prop:null-preimages}
For every \(h>0\), \(\bar F_h^{-1}(N)\) is null whenever
\(N\subset\mathcal Z\) is null.
\end{proposition}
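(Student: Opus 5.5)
The plan is the standard argument for real-analytic maps: split $\mathcal Z$ into the set where $\bar F_h$ is a local diffeomorphism and the critical set of the map, and show that the critical set is null. The Jacobian $J_h(z)=\det(I+hKD(z)|_E)$ is a trigonometric polynomial in $z$, so it is real-analytic on the connected torus $\mathcal Z$. The first step is to check that $J_h$ is not identically zero. It suffices to find one point where it is nonzero. At the uniform twist $z=\frac{\pi}{2}\ones$ (which lies in $\mathcal Z$ exactly when $n\cdot\pi/2\in2\pi\mathbb Z$, so this needs care) we have $D=0$ and $J_h=1$. A point that always lies in $\mathcal Z$ is better: choose $z$ with $\cos z_i=0$ for every $i$, taking each $z_i\in\{\pm\pi/2\}$ with signs arranged so that $\ones^Tz\equiv0\pmod{2\pi}$. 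This works for even $n$. For odd $n$, I would instead vary a single coordinate slightly and use continuity of $J_h$ near a point where $D$ is small, or argue directly that $J_h(z)=1+O(\|D\|)$ on a nonempty open set. For example, take $z_i=\pi/2+\epsilon_i$ with small $\epsilon$, let $z_n$ absorb the closure defect, and use a nearby configuration with all $|\cos z_i|<1/(2h\Lambda_n)$. Such a configuration exists because $\ones^Tz$ can be adjusted by multiples of $\pi$ through the choice of signs of $\pm\pi/2$ plus a small correction: with $n$ odd, $\sum\pm\pi/2$ is an odd multiple of $\pi/2$, so I would instead use values near $\pi/2$ and $3\pi/2$ mixed with small shifts spread over all coordinates. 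This gives $\|hKD|_E\|<1/2$ and hence $J_h\neq0$.

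Once $J_h\not\equiv0$, the zero set $S=\{J_h=0\}$ of a nontrivial real-analytic function on a connected analytic manifold is $\mu$-null. This is a standard fact, proved by induction on dimension via Fubini or by citing the Mityagin lemma.

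On the open set $\mathcal Z\setminus S$ the map is a local diffeomorphism. I would cover $\mathcal Z\setminus S$ by countably many open sets $U_k$ on which $\bar F_h|_{U_k}$ is a diffeomorphism onto its image. For null $N$, the set $(\bar F_h|_{U_k})^{-1}(N)$ is the image of the null set $N\cap\bar F_h(U_k)$ under a $C^1$ diffeomorphism (the local inverse), hence null, since $C^1$ maps send null sets to null sets. Therefore
\[
 \bar F_h^{-1}(N)\subset S\cup\bigcup_k(\bar F_h|_{U_k})^{-1}(N)
\]
is a countable union of null sets.

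The only genuinely nontrivial step is exhibiting a point of $\mathcal Z$ where $J_h\neq0$ for every $h>0$ and every parity of $n$. Everything else is routine. If the explicit construction above becomes awkward, I would fall back on noting that $J_h(z)$, viewed as a polynomial in $h$, has constant term $1$. For fixed $h$, the map $z\mapsto J_h(z)$ cannot vanish identically: averaging $J_h$ over $\mathcal Z$ using independence-type cancellations of the $\cos z_i$ would give mean $1$, though this needs checking because of the closure constraint. The cleaner route is the explicit near-$\pm\pi/2$ configuration with $\|D\|$ small.
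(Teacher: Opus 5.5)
\textbf{The overall structure is sound and matches the paper's.} $J_h$ is real-analytic on the connected torus $\mathcal Z$. Its zero set is null as soon as $J_h\not\equiv0$. The rest of $\mathcal Z$ is covered by countably many neighbourhoods carrying $C^1$ local inverses. Hence $\bar F_h^{-1}(N)$ lies in the zero set together with countably many null images. The even-$n$ witness ($n/2$ coordinates equal to $\pi/2$ and $n/2$ equal to $-\pi/2$, so $D=0$ and $J_h=1$) is also fine.

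\textbf{The gap is the odd-$n$ witness, the step you yourself call the only nontrivial one; your proposed construction cannot exist for large $h$.}
\begin{itemize}
\item Suppose every $z_i$ lies within $\pi/(2n)$ of $\pm\pi/2$. Then $\ones^Tz$ is an odd multiple of $\pi/2$ plus an error of modulus less than $\pi/2$, so $\ones^Tz\not\equiv0\pmod{2\pi}$.
\item Hence $\max_i|\cos z_i|\ge\sin(\pi/(2n))$ on all of $\mathcal Z$. Your requirement $|\cos z_i|<1/(2h\Lambda_n)$ therefore fails once $h\ge1/(2\Lambda_n\sin(\pi/(2n)))$, and the statement covers every $h>0$.
\item For $n=3$ this threshold is $1/3=1/\Lambda_3$. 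So your construction works only on the invertibility interval, where the proposition is trivial. It fails on $1/3\le h<2/3$, which is exactly where the paper needs the result.
\end{itemize}

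The misstep is aiming for $D(z)\approx0$. You only need $I+hKD(z)|_E$ to be nonsingular, and nonnegative cosines make every eigenvalue of $hKD(z)$ nonnegative for every $h$. The paper takes $z=0$, which lies in $\mathcal Z$ for every $n$. There $D=I$ and $J_h(0)=\det(I+hK_0)=\prod_{j=1}^{n-1}(1+h\lambda_j)>0$.

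Your averaging fallback would also close the gap if carried out:
\begin{itemize}
\item Since $KD$ maps $\mathbb R^n$ into $E$, $J_h(z)=\det(I_n+hKD(z))=\sum_S h^{|S|}\det K_{SS}\prod_{i\in S}\cos z_i$.
\item Under $\mu$, any $n-1$ edge coordinates are independent and uniform, so every nonempty proper $S$ contributes mean zero.
\item $\det K=0$ removes $S=\{1,\dots,n\}$, so $\int J_h\,d\mu=1$.
\end{itemize}
As written, however, neither odd-$n$ route is completed.
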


\begin{proof}
The function \(J_h\) is analytic on the connected torus \(\mathcal Z\)
and is not identically zero, since
\(J_h(0)=\det(I+hK_0)>0\) (compare
\cite[Lemma~2.4]{CheriditoJentzenRossmannek2024}).  Hence its zero set \(\Sigma_h\) has measure
zero.  The complement \(\mathcal Z\setminus\Sigma_h\) has a countable
cover by inverse-function neighbourhoods on which \(\bar F_h\) is a
local diffeomorphism.  Local inverse maps preserve null sets, while the
remaining part of every inverse image lies in \(\Sigma_h\).
\end{proof}

\section{Critical configurations and their indices}
\label{sec:critical}

By the equal-sine condition \eqref{eq:critcondition}, every critical
configuration takes at most two edge differences:
\begin{equation}
\label{eq:twobranch}
 \alpha\qquad\text{and}\qquad\pi-\alpha\pmod{2\pi}.
\end{equation}
This two-branch description and the finite stationary counts occur in the
periodic XY and Kuramoto literature
\cite{MehtaKastner2011,LindbergZachariahBostonLesieutre2023}; see
\cite{BerglundMedvedevSimpson2025} for the opposite sign of the
potential. We compute the indices directly from the restricted Hessian;
compare the signed-Laplacian approach of \cite{BronskiDeVilleFerguson2016}.

\subsection{Unequal branch multiplicities}
\label{subsec:finitecritical}

Let \(A\subset\{1,\ldots,n\}\), write \(k=|A|\), and let \(\ones_A\)
be its indicator vector. Assign the value \(\pi-\alpha\) to the
coordinates in \(A\) and \(\alpha\) to the remaining coordinates:
\begin{equation}
\label{eq:critconfig}
 z=\alpha\ones+(\pi-2\alpha)\ones_A\pmod{2\pi}.
\end{equation}
When the two multiplicities differ, choose \(A\) to index the less frequent
value. Thus \(\alpha\) is the majority value and
\begin{equation}
\label{eq:dk}
 0\le k\le\left\lfloor\frac{n-1}{2}\right\rfloor,
 \qquad d_k=n-2k>0.
\end{equation}
Closure is equivalent to
\begin{equation}
\label{eq:closurealpha}
 d_k\alpha+k\pi\equiv0\pmod{2\pi}.
\end{equation}
Its \(d_k\) distinct solutions are
\begin{equation}
\label{eq:alphakr}
 \alpha_{k,r}=\frac{2\pi r-k\pi}{d_k}\pmod{2\pi},
 \qquad r=0,\ldots,d_k-1.
\end{equation}
If the multiplicities are equal, then \(k=n/2\), and closure instead
requires
\begin{equation}
\label{eq:halfcondition}
 \frac{n}{2}\pi\equiv0\pmod{2\pi}.
\end{equation}
This is possible exactly when \(n\) is divisible by four. The same
divisibility condition is necessary and sufficient for the two values in
\eqref{eq:twobranch} to coincide in an admissible configuration: all
coordinates then equal \(\pi/2\) or all equal \(3\pi/2\). We write
\(c_{k,r}:=\cos\alpha_{k,r}\).

\begin{theorem}[Critical-point classification]
\label{thm:classification}
If $n\not\equiv0\pmod4$, every critical point of $V$ is represented
exactly once by \eqref{eq:critconfig}--\eqref{eq:alphakr}, is
non-degenerate, and satisfies
\begin{equation}
\label{eq:index-general}
 \indM(z)=
 \begin{cases}
 n-1-k,&c_{k,r}>0,\\
 k,&c_{k,r}<0,
 \end{cases}
 \qquad
 V(z)=d_kc_{k,r}.
\end{equation}
\end{theorem}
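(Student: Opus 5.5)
The plan is to verify the three assertions in turn: the representation is unique, the point is non-degenerate, and the index and value are as in \eqref{eq:index-general}. Each reduces to the explicit form \eqref{eq:critconfig} together with the block structure of $D(z)$.

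\emph{Representation.} By \eqref{eq:critcondition} and \eqref{eq:twobranch}, every critical $z$ takes only the values $\alpha$ and $\pi-\alpha$. These two values coincide only when $\alpha\equiv\pi/2$ or $\alpha\equiv3\pi/2$. In that case $z$ is constant, $z=\alpha\ones$, and closure $n\alpha\equiv0\pmod{2\pi}$ forces $4\mid n$. Equal multiplicities $k=n/2$ also force $4\mid n$, by \eqref{eq:halfcondition}. So if $4\nmid n$, the two values are distinct and occur with distinct multiplicities. The majority value $\alpha$ and the minority set $A$ are then determined by $z$. Closure \eqref{eq:closurealpha} then puts $\alpha$ among the $d_k$ values \eqref{eq:alphakr}, and these are distinct modulo $2\pi$. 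Hence each critical point has exactly one representation. As a by-product, $c:=c_{k,r}\neq0$, since $c=0$ means $\alpha\equiv\pm\pi/2$. The value is $V(z)=(n-k)c+k\cos(\pi-\alpha)=(n-k)c-kc=d_kc$.

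\emph{Hessian.} The quadratic form of $H_z$ on $E$ is $Q(u)=-u^TD(z)u$, where $D(z)$ equals $c$ on $A^c$ and $-c$ on $A$. So $Q(u)=-c\sum_{i\notin A}u_i^2+c\sum_{i\in A}u_i^2$. I would split $E$ into three $Q$-orthogonal pieces:
\begin{itemize}
\item $E_1=\{u:\ \operatorname{supp}u\subset A^c,\ \ones^Tu=0\}$, of dimension $n-k-1$;
\item $E_2=\{u:\ \operatorname{supp}u\subset A,\ \ones^Tu=0\}$, of dimension $\max(k-1,0)$;
\item the line spanned by $w=k\ones_{A^c}-(n-k)\ones_A$, present only when $k\ge1$.
\end{itemize}
These pieces span $E$ and are mutually $Q$-orthogonal, because $D$ is constant on each block and the pieces in $E_1$ and $E_2$ have zero block sums. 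On them we have $Q|_{E_1}=-c\,|\cdot|^2$ and $Q|_{E_2}=c\,|\cdot|^2$. On the line, $Q(w)=-ck^2(n-k)+c(n-k)^2k=c\,k(n-k)d_k$, which is nonzero because $d_k>0$. Thus $Q$ is non-degenerate on $E$, so $z$ is a non-degenerate critical point.

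\emph{Counting negative directions.} If $c>0$, the negative directions are exactly $E_1$, giving $n-k-1$. If $c<0$, they are $E_2$ together with $w$, giving $(k-1)+1=k$. When $k=0$ we have $E=E_1$, and the formulas still read $n-1$ and $0$ respectively.

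The main obstacle is the bookkeeping rather than any hard estimate: confirming that $4\nmid n$ excludes both degenerate possibilities (equal multiplicities and coincident values), and handling the edge case $k=0$ correctly. The sign of the mixed direction $w$ is the one genuinely non-obvious contribution to the index, and it depends on $d_k>0$.
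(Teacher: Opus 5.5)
Your proposal is correct and follows essentially the same route as the paper's proof. Both arguments use $4\nmid n$ to exclude equal multiplicities and coincident branch values, which makes $\alpha$ and $A$ unique with $c\neq0$. Both then use the three-block $Q$-orthogonal splitting of $E$ with the mixed direction $w$ (yours is the negative of the paper's $n\ones_A-k\ones$) and $Q(w)=c\,k(n-k)d_k$. The only omission is the converse, that each listed configuration is actually critical, which the paper states explicitly; you leave it implicit, and it follows immediately from equal sines and closure by construction.
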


\begin{proof}
The observations preceding the theorem exclude equal multiplicities and
coincident branch values. Every critical point therefore has a unique
majority value $\alpha$, a unique exceptional set $A$, and
$\cos\alpha\ne0$. Equation~\eqref{eq:closurealpha} gives exactly the
angles in \eqref{eq:alphakr}. Conversely, each prescribed configuration
satisfies closure and has equal sines, so it is critical. Uniqueness of
$\alpha$ and $A$ shows that the parametrisation has no repetitions,
including for $k=0$.

Put $c=\cos\alpha$, so that $V(z)=(n-k)c-kc=d_kc$. The Hessian form on
$E$ is
\begin{equation*}
 Q_z(u):=u^TH_zu
 =-\sum_{i=1}^{n}\cos z_i\,u_i^2
 =c\sum_{i\in A}u_i^2-c\sum_{i\notin A}u_i^2.
\end{equation*}
For $k\ge1$, decompose $E$ into the zero-sum subspace supported on
$A$, the zero-sum subspace supported on its complement, and the line
generated by
\[
 w=n\ones_A-k\ones.
\]
These subspaces have dimensions $k-1$, $n-1-k$, and one, respectively,
and are mutually orthogonal for $Q_z$. The form has sign $c$ on the
first, sign $-c$ on the second, and
\[
 Q_z(w)=k(n-k)d_kc
\]
on the mixed direction. Since $d_k>0$ and $c\ne0$, this proves
non-degeneracy and \eqref{eq:index-general}. For $k=0$, the restricted
Hessian is $-cI$.
\end{proof}

\begin{corollary}[Critical-point count]
\label{cor:critical-count}
If $n\not\equiv0\pmod4$, then $V$ is Morse with
$n\binom{n-1}{\lfloor (n-1)/2\rfloor}$ critical points.
\end{corollary}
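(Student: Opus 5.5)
We use Theorem~\ref{thm:classification}: it already gives non-degeneracy of every critical point when $4\nmid n$, so $V$ is Morse. Moreover, each critical point is represented exactly once by a triple $(k,A,r)$ with $0\le k\le\lfloor(n-1)/2\rfloor$, $|A|=k$ and $0\le r\le d_k-1$, where $d_k=n-2k$. Hence the proof reduces to the count
\[
 \#\Crit(V)=\sum_{k=0}^{m}\binom{n}{k}(n-2k),\qquad m=\left\lfloor\frac{n-1}{2}\right\rfloor ,
\]
together with the identity $\sum_{k=0}^{m}\binom nk(n-2k)=n\binom{n-1}{m}$.

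To prove the identity, expand
\[
 (n-2k)\binom nk=(n-k)\binom nk-k\binom nk .
\]
Then use the standard relations $(n-k)\binom nk=n\binom{n-1}{k}$ and $k\binom nk=n\binom{n-1}{k-1}$. The summand becomes
\[
 n\Bigl[\binom{n-1}{k}-\binom{n-1}{k-1}\Bigr],
\]
with the convention $\binom{n-1}{-1}=0$. The sum over $0\le k\le m$ therefore telescopes to $n\binom{n-1}{m}$, which is the claimed count.

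The main point to check is that no critical configuration is omitted. The classification theorem handles this: when $4\nmid n$, equal multiplicities $k=n/2$ are excluded by \eqref{eq:halfcondition}, and coincident branch values are excluded as well. So the range $k\le\lfloor(n-1)/2\rfloor$ is exhaustive in both parities. For odd $n$ it is $k\le(n-1)/2$; for $n\equiv2\pmod4$ it is $k\le n/2-1$.

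Distinctness is also covered by the theorem. The $d_k$ angles $\alpha_{k,r}$ are distinct modulo $2\pi$, and the pair $(\alpha,A)$ is recovered uniquely from $z$. In particular there is no double counting at $k=0$, where $A=\emptyset$ and the $n$ uniform twists arise. A quick sanity check: $n=3$ gives $3\binom21=6$ critical points, namely $3$ twists plus $3\cdot1$ configurations with $k=1$.
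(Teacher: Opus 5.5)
Your proposal is correct and follows the paper's proof: both take non-degeneracy and the unique parametrisation from Theorem~\ref{thm:classification}, count $d_k\binom{n}{k}$ configurations for each $k$, and telescope via $d_k\binom{n}{k}=n\bigl[\binom{n-1}{k}-\binom{n-1}{k-1}\bigr]$. You also spell out the binomial identities and the exhaustiveness and distinctness checks, which the paper leaves to the theorem.
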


\begin{proof}
There are $\binom{n}{k}$ choices of $A$ and $d_k$ admissible angles
for each choice. Since
\[
 d_k\binom{n}{k}
 =n\left[\binom{n-1}{k}-\binom{n-1}{k-1}\right],
 \qquad \binom{n-1}{-1}=0,
\]
the sum telescopes:
\[
 \sum_{k=0}^{\lfloor (n-1)/2\rfloor}d_k\binom{n}{k}
 =n\binom{n-1}{\lfloor (n-1)/2\rfloor}.
\]
Non-degeneracy is part of Theorem~\ref{thm:classification}.
\end{proof}

\begin{remark}[Index consistency]
\label{rem:morseconsistency}
For $n\not\equiv0\pmod4$, the indices satisfy, as they must, the Morse
relation
\[
 \sum_{z\in\Crit(V)}(-1)^{\indM(z)}
 =\chi(\mathcal Z)=\chi(\T^{n-1})=0,
\]
where \(\chi\) is the Euler characteristic. The count in
Corollary~\ref{cor:critical-count} equals the cycle bound for isolated
complex solutions of the algebraic Kuramoto equations in
\cite{ChenDavisMehta2018}.
\end{remark}

\subsection{Critical circles when the ring size is divisible by four}
\label{subsec:4l}

Throughout this subsection, \(n\equiv0\pmod4\). For \(|A|=n/2\),
closure \eqref{eq:halfcondition} is automatic.
The corresponding configurations form the circle
\begin{equation}
\label{eq:criticalcircle}
 \mathcal C_A
 =\left\{\alpha\ones+(\pi-2\alpha)\ones_A:
          \alpha\in\T\right\}\subset\mathcal Z.
\end{equation}
This equal-partition construction appears in
\cite[Lemma~6]{LindbergZachariahBostonLesieutre2023}; see also
\cite[Section~5.1]{ChenKorchevskaiaLindberg2026}.
Complementary sets define the same circle:
\(\mathcal C_A=\mathcal C_{A^c}\). All these circles pass through
\begin{equation*}
 p_+=\frac\pi2\ones,\qquad p_-=\frac{3\pi}{2}\ones.
\end{equation*}

\begin{theorem}[Critical set for multiples of four]
\label{thm:4l}
If $n\equiv0\pmod4$, then $\Crit(V)$ is the union of the
circles \eqref{eq:criticalcircle} and the configurations
\eqref{eq:critconfig}--\eqref{eq:alphakr} with $c_{k,r}\ne0$. These
configurations are non-degenerate, hence isolated, and satisfy
\eqref{eq:index-general}. Distinct circles meet exactly at $p_+$ and
$p_-$, and $V$ vanishes on every circle.
\end{theorem}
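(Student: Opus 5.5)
The plan is to follow the proof of Theorem~\ref{thm:classification}, tracking which parts of that argument fail when $4\mid n$. By \eqref{eq:critcondition}, every critical point takes only the values $\alpha$ and $\pi-\alpha$. There are three cases to separate. If the two multiplicities differ, the majority value $\alpha$ and the minority set $A$ with $|A|=k<n/2$ are determined uniquely, provided $\alpha\neq\pi-\alpha$. Closure then forces $\alpha=\alpha_{k,r}$ as in \eqref{eq:alphakr}. If $\alpha\equiv\pi-\alpha$, that is, $\alpha\in\{\pi/2,3\pi/2\}$, then the configuration is $p_\pm$. Since $\ones_A$ may be taken to have any size, $p_\pm$ can be written as $\alpha\ones+(\pi-2\alpha)\ones_A$ with $|A|=n/2$, so $p_\pm\in\mathcal C_A$. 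If the multiplicities are equal, $|A|=n/2$, closure \eqref{eq:halfcondition} is automatic, and the point lies on $\mathcal C_A$. Conversely, every point of every $\mathcal C_A$ has equal sines and satisfies closure, so it is critical. This gives the union description.

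Next I will show that the configurations with $k<n/2$ and $c_{k,r}=0$ are exactly $p_\pm$. Indeed, $\cos\alpha=0$ gives $\pi-\alpha=\alpha$, so $z=p_\pm$, and these points already lie on the circles. The remaining configurations, those with $c_{k,r}\neq0$, are non-degenerate with index \eqref{eq:index-general} by exactly the same Hessian decomposition as in Theorem~\ref{thm:classification}. That argument used only $d_k>0$ and $c\neq0$. Non-degeneracy then gives isolation by the Morse lemma. I also have to check that such a point does not lie on any circle. A point on $\mathcal C_A$ has equal multiplicities unless $\alpha\equiv\pi-\alpha$, in which case it is $p_\pm$ and has $c=0$. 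So the isolated configurations and the circles are disjoint.

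For the intersection claim, let $z\in\mathcal C_A\cap\mathcal C_{B}$ with $z\neq p_\pm$. Then $z$ takes two distinct values, each with multiplicity $n/2$. The two index sets where $z$ equals a given value are determined by $z$, so $\{A,A^c\}=\{B,B^c\}$ and the circles coincide. Conversely, every circle contains $p_+$ (take $\alpha=\pi/2$) and $p_-$ (take $\alpha=3\pi/2$). Finally, on $\mathcal C_A$ we have $V=\frac n2\cos\alpha+\frac n2\cos(\pi-\alpha)=0$.

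I do not expect a serious obstacle. The delicate point is the bookkeeping that keeps the two descriptions disjoint: the configuration \eqref{eq:critconfig} with $c_{k,r}=0$ degenerates into $p_\pm$, and the majority/minority choice of $A$ breaks down exactly at $\alpha=\pi/2,3\pi/2$. This case split must be stated carefully so that no critical point is missed or double-counted.
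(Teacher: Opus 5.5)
Your proposal is correct and follows essentially the same route as the paper. Both arguments use the same case split from the equal-sine condition. Both observe that $\pi/2$ and $3\pi/2$ solve the closure equation for every $k$ yet represent only $p_\pm$. Both reuse the Hessian decomposition, which needs only $d_k>0$ and $\cos\alpha\neq0$. Both note that away from $p_\pm$ the two coordinate values determine the partition up to complementation. The one point the paper adds is that the parametrisation of $\mathcal C_A$ is injective with nonzero derivative, so each $\mathcal C_A$ is an embedded circle.
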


\begin{proof}
For unequal multiplicities, closure gives \eqref{eq:alphakr}. The angles
$\pi/2$ and $3\pi/2$ solve it for every $k$, but their branch values
coincide and represent only $p_+$ and $p_-$. Every other angle gives a
uniquely represented critical point. Its non-degeneracy and index follow
from the Hessian calculation in the proof of
Theorem~\ref{thm:classification}, which uses only $d_k>0$ and
$\cos\alpha\ne0$.

Equal multiplicities give exactly \eqref{eq:criticalcircle}. Its
parametrisation is injective with nonzero derivative, hence an embedding.
Away from $p_\pm$, the two coordinate values determine the partition up
to interchange. Thus only complementary sets define the same circle, and
distinct circles have no other intersections. Finally, $V=d_{n/2}c=0$
on every circle.
\end{proof}

\begin{corollary}[Critical-set counts]
\label{cor:critical-counts-4l}
If $n\equiv0\pmod4$, there are
\begin{equation}
\label{eq:numberisolated4l}
 \frac{n+2}{2}\binom{n}{n/2}-2^{n},
\end{equation}
isolated critical points and
\begin{equation}
\label{eq:numbercircles}
 \frac12\binom{n}{n/2}
\end{equation}
critical circles.
\end{corollary}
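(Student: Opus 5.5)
The plan is to count directly from the parametrisation in Theorem~\ref{thm:4l}, doing the isolated points and the circles separately. Both counts reduce to binomial sums, and the telescoping identity from the proof of Corollary~\ref{cor:critical-count} does most of the work.

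\emph{Circles.} By Theorem~\ref{thm:4l}, the circles are indexed by subsets $A$ with $|A|=n/2$. Two such sets give the same circle exactly when they are equal or complementary. Since $A\ne A^c$ always holds, every circle arises from exactly two subsets. This gives $\tfrac12\binom{n}{n/2}$ circles, which is \eqref{eq:numbercircles}.

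\emph{Isolated points.} For each $0\le k\le n/2-1$ (here $\lfloor (n-1)/2\rfloor=n/2-1$ because $n$ is even), there are $\binom nk$ sets $A$ and $d_k$ angles $\alpha_{k,r}$. Theorem~\ref{thm:4l} says each angle with $c_{k,r}\ne0$ gives a uniquely represented isolated point, so the only task is to count the excluded angles, namely those with $\alpha_{k,r}\in\{\pi/2,3\pi/2\}$.

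\begin{itemize}
\item Closure \eqref{eq:closurealpha} at $\alpha=\pi/2$ reads $d_k\pi/2+k\pi=n\pi/2\equiv0\pmod{2\pi}$, which holds because $4\mid n$. The same computation works at $\alpha=3\pi/2$.
\item These are two distinct elements of the list \eqref{eq:alphakr}.
\item Hence exactly two angles are removed for each $k$ and each $A$.
\end{itemize}

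The number of isolated points is therefore
\[
 \sum_{k=0}^{n/2-1}(d_k-2)\binom nk
 = n\binom{n-1}{n/2-1}-2\sum_{k=0}^{n/2-1}\binom nk ,
\]
where the first sum telescopes exactly as in Corollary~\ref{cor:critical-count}.

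To finish, use $n\binom{n-1}{n/2-1}=\tfrac n2\binom n{n/2}$. Symmetry of the binomial coefficients gives $2\sum_{k<n/2}\binom nk=2^n-\binom n{n/2}$. Substituting, the count becomes $\tfrac{n}{2}\binom n{n/2}+\binom n{n/2}-2^n=\tfrac{n+2}{2}\binom n{n/2}-2^n$, which is \eqref{eq:numberisolated4l}.

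The only delicate step is confirming that exactly two angles are removed for every $k$. This relies on $4\mid n$ and on $d_k$ being even, so that $\pi/2$ and $3\pi/2$ are distinct roots of the closure equation. The rest is bookkeeping.
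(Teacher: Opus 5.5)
Your argument is correct and is essentially the paper's proof: the circles are counted as complementary pairs of $n/2$-subsets, and the isolated points as $\sum_{k=0}^{n/2-1}(d_k-2)\binom nk$, evaluated with the telescoping identity of Corollary~\ref{cor:critical-count} and $\sum_{k<n/2}\binom nk=2^{n-1}-\tfrac12\binom n{n/2}$. Your check that both $\pi/2$ and $3\pi/2$ solve the closure equation for every $k$, which uses only $4\mid n$, spells out a step the paper leaves to Theorem~\ref{thm:4l}.
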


\begin{proof}
Removing the two coincident-branch angles leaves
\[
 \sum_{k=0}^{(n-2)/2}(d_k-2)\binom{n}{k}
\]
isolated points. The telescoping identity used in
Corollary~\ref{cor:critical-count}, together with
\[
 \sum_{k=0}^{(n-2)/2}\binom{n}{k}
 =2^{n-1}-\frac12\binom{n}{n/2},
\]
gives \eqref{eq:numberisolated4l}. Complementary subsets define the same
critical circle and no others do, which gives \eqref{eq:numbercircles}.
\end{proof}

\begin{proposition}[Hessian geometry of the critical circles]
\label{prop:circle-hessian}
For $z\in\mathcal C_A\setminus\{p_+,p_-\}$, $H_z$ has $(n-2)/2$
positive and $(n-2)/2$ negative eigenvalues, and kernel
$T_z\mathcal C_A$. Moreover, $H_{p_\pm}=0$ and
$D\bar F_h(p_\pm)|_E=I$.
\end{proposition}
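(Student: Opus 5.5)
The plan is to reuse the Hessian form from the proof of Theorem~\ref{thm:classification}. For $z=\alpha\ones+(\pi-2\alpha)\ones_A\in\mathcal C_A$ with $|A|=n/2$, put $c=\cos\alpha$. Since $\cos(\pi-\alpha)=-c$, the form on $E$ is
\[
 Q_z(u)=c\sum_{i\in A}u_i^2-c\sum_{i\notin A}u_i^2 .
\]
Away from $p_\pm$ we have $c\ne0$, because $\cos\alpha=0$ means $\alpha\in\{\pi/2,3\pi/2\}$, which gives exactly $p_\pm$. We use the same orthogonal splitting of $E$ as before: the zero-sum vectors supported on $A$ (dimension $n/2-1$), the zero-sum vectors supported on $A^c$ (dimension $n/2-1$), and the mixed line spanned by $w=n\ones_A-k\ones$ with $k=n/2$. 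The two zero-sum subspaces are $Q_z$-orthogonal to each other and to $w$, exactly as in that proof. $Q_z$ has sign $c$ on the first and sign $-c$ on the second, which gives $(n-2)/2$ eigenvalues of each sign. On the mixed line, $Q_z(w)=k(n-k)d_kc=0$ because $d_{n/2}=0$. So $w$ spans the kernel of $H_z$. A remark is needed here: $H_z$ is the operator $-P_ED|_E$, symmetric on $E$ for the Euclidean inner product. The $Q_z$-orthogonal decomposition into Euclidean-orthogonal pieces on which $D$ acts as a scalar shows that these pieces are in fact eigenspaces. This holds because $D=c(\text{diag}(\ones_A)-\text{diag}(\ones_{A^c}))\cdot(-1)$ preserves each of them and maps $w$ into a multiple of $\ones$, which is killed by $P_E$.

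Next I identify the kernel with $T_z\mathcal C_A$. Differentiating the parametrisation \eqref{eq:criticalcircle} in $\alpha$ gives the tangent vector $\ones-2\ones_A$. It lies in $E$ because $|A|=n/2$. Also $n\ones_A-\tfrac n2\ones=-\tfrac n2(\ones-2\ones_A)$, so $w$ is parallel to it. Hence $\ker H_z=\operatorname{span}\{w\}=T_z\mathcal C_A$.

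At $p_\pm$, every coordinate is $\pi/2$ or $3\pi/2$, so $D(p_\pm)=0$ and $H_{p_\pm}=0$. Then \eqref{eq:linearisation}, which only needs $p$ critical, gives $D\bar F_h(p_\pm)|_E=I-hK_0\cdot0=I$. Alternatively, one can use $D\bar F_h|_E=I+hKD|_E$ directly.

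I do not expect a real obstacle. The only point needing care is the eigenvalue count: we must check that the $Q_z$-orthogonal splitting is also orthogonal for the Euclidean inner product on $E$ and invariant under $H_z$, so that the counts are genuine eigenvalue multiplicities and not just inertia. The invariance argument above settles this. In any case, inertia alone already gives the signature claim, and the kernel is identified directly.
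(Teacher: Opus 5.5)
Your proof is correct and takes essentially the same route as the paper. Both use the three-block decomposition of $E$ into zero-sum vectors on $A$, zero-sum vectors on $A^c$, and the tangent line spanned by $\ones-2\ones_A$; both identify the kernel by noting that $D(z)$ sends the tangent vector to a multiple of $\ones$, which $P_E$ annihilates; and both use $H_{p_\pm}=0$ in \eqref{eq:linearisation}. Your extra check that the blocks are genuine eigenspaces is valid but not needed, since $H_z$ is Euclidean-symmetric on $E$ and its eigenvalue signs coincide with the inertia of $Q_z$.
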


\begin{proof}
Put $c=\cos\alpha\ne0$.  The zero-sum subspaces supported on $A$ and
$A^c$ each have dimension $(n-2)/2$, and the Hessian form is definite on
them with opposite signs.  The remaining direction is the tangent vector
\[
 w=\ones-2\ones_A.
\]
Writing $D=\operatorname{diag}(\cos z_i)$ gives
\[
 H_zw=-P_EDw=P_E(-c\ones)=0.
\]
These three subspaces exhaust $E$.  At $p_\pm$ every cosine vanishes, so
$H_{p_\pm}=0$; equation~\eqref{eq:linearisation} then gives the identity
derivative.
\end{proof}

Equilibria at which every edge cosine vanishes, such as $p_\pm$, are
the completely degenerate equilibria studied for continuous Kuramoto
networks in \cite{Sclosa2024}.

\begin{remark}[Local Morse--Bott geometry]
Each critical circle is Morse--Bott away from $p_\pm$; the union of
circles is not a submanifold of $\mathcal Z$.
\end{remark}

\begin{remark}[The intersections are not extrema]
\label{rem:notextrema}
Near \(p_+\), write \(z=p_++y\) with \(y\in E\). Taylor expansion gives
\begin{equation*}
 V(p_++y)=\frac16\sum_{i=1}^{n}y_i^3+O(\|y\|^5).
\end{equation*}
The linear term vanishes because \(\ones^Ty=0\). Taking
\(y=(2t,-t,-t,0,\ldots,0)\) gives \(V=t^3+O(t^5)\), with both signs
arbitrarily close to \(p_+\). At \(p_-\) the expansion has the opposite
sign. Hence neither intersection is a local extremum.
\end{remark}

\begin{proposition}[Multipliers along the critical circles]
\label{prop:circlemultipliers}
For \(z\in\mathcal C_A\setminus\{p_+,p_-\}\) and \(0<h<1/2\),
\(D\bar F_h(z)|_E\) has the multiplier one along \(T_z\mathcal C_A\),
\((n-2)/2\) multipliers of modulus less than one and \((n-2)/2\)
greater than one.
\end{proposition}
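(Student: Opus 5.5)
The plan is to read the multipliers off the linearisation \eqref{eq:linearisation}, $D\bar F_h(z)|_E=I-hK_0H_z$, using the inertia of $H_z$ from Proposition~\ref{prop:circle-hessian}. The key observation is that $n\equiv0\pmod4$ forces $n$ even, so $\Lambda_n=4$. Hence $0<h<1/2$ is exactly the Lyapunov interval \eqref{eq:lyaprange}, which also explains where the bound $1/2$ in the statement comes from.

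\textbf{Step 1 (similarity and inertia).} As in the proof of Proposition~\ref{prop:hyperbolicity}, $K_0H_z$ is similar to the symmetric matrix $K_0^{1/2}H_zK_0^{1/2}$. Its eigenvalues $\eta$ are therefore real, and by Sylvester's law of inertia the numbers of positive, negative and zero eigenvalues coincide with those of $H_z$. Proposition~\ref{prop:circle-hessian} then gives $(n-2)/2$ positive values, $(n-2)/2$ negative values, and a one-dimensional kernel.

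\textbf{Step 2 (multipliers from $\eta$).} Since $\|K_0\|\le4$ and $\|H_z\|\le1$, every eigenvalue satisfies $|\eta|\le4$. For the nonzero eigenvalues:
\begin{itemize}
\item if $\eta>0$, then $|1-h\eta|<1$, because $h\eta<2$ when $h<1/2$;
\item if $\eta<0$, then $1-h\eta>1$.
\end{itemize}
This yields $(n-2)/2$ multipliers of modulus less than one and $(n-2)/2$ greater than one. The same similarity shows that $D\bar F_h(z)|_E$ is diagonalisable, so these eigenvalue counts are genuine multiplicity counts.

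\textbf{Step 3 (the unit multiplier and its eigenvector).} The multiplier one comes from $\eta=0$, whose eigenspace is $\ker(K_0H_z)=\ker H_z$ because $K_0$ is invertible. By Proposition~\ref{prop:circle-hessian}, $\ker H_z=T_z\mathcal C_A$. Directly, with $w=\ones-2\ones_A$ one has $H_zw=0$, so $D\bar F_h(z)w=w$. This is consistent with the whole circle consisting of fixed points, since $\mathcal C_A\subset\Crit(V)=\operatorname{Fix}(\bar F_h)$ by Proposition~\ref{prop:global}.

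The only step needing care is to make sure the multiplier one has algebraic multiplicity exactly one, so that no Jordan block hides a further unit multiplier. Symmetrisability settles this: $K_0H_z$ is similar to a symmetric matrix, hence semisimple, so the algebraic multiplicity of $\eta=0$ equals $\dim\ker H_z=1$. The multipliers along the three subspaces therefore account for all $n-1$ dimensions of $E$.
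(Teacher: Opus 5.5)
Your proposal is correct and follows the paper's proof: symmetrise $K_0H_z$ via $K_0^{1/2}H_zK_0^{1/2}$, take its inertia from Proposition~\ref{prop:circle-hessian}, bound $|\eta|\le4$ using $\|K_0\|\le4$ and $\|H_z\|\le1$, and read off the multipliers $1-h\eta$. Your remarks that $K_0H_z$ is semisimple and that $\ker(K_0H_z)=\ker H_z=T_z\mathcal C_A$ spell out steps the paper's proof leaves implicit.
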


\begin{proof}
The operator \(K_0H_z\) is similar to the symmetric matrix
\(K_0^{1/2}H_zK_0^{1/2}\), whose inertia is given by
Proposition~\ref{prop:circle-hessian}. Its eigenvalues \(\eta\) satisfy \(|\eta|\le4\),
since \(\|K_0\|_2=4\) and \(\|H_z\|_2\le1\).
By \eqref{eq:linearisation}, the corresponding multipliers are
\(1-h\eta\). A positive \(\eta\) gives modulus less than one, a
negative \(\eta\) gives a multiplier greater than one, and the kernel
gives the tangent multiplier one.
\end{proof}

\begin{proposition}[Instability of the intersection points]
\label{prop:degenerateunstable}
For \(0<h<1/2\), neither \(p_+\) nor \(p_-\) is Lyapunov stable.
\end{proposition}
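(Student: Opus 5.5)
The plan is to combine the descent property of Theorem~\ref{thm:descent-invertibility} with the fact, from Remark~\ref{rem:notextrema}, that $p_\pm$ are not local minima of $V$. For $0<h<1/2$ we have $h<2/\Lambda_n=1/2$ because $n$ is even, so the Lyapunov interval applies. We argue by contradiction. Suppose $p_+$ is Lyapunov stable, and fix a small ball $U=B_\varepsilon(p_+)$ (in the $g$-metric, say). Choose $\varepsilon$ small enough that $p_+$ is the only point of $\Crit(V)\cap U$ at which $V$ vanishes and which is not on a critical circle. Then stability yields $\delta>0$ such that every orbit starting in $B_\delta(p_+)$ stays in $U$.

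By Remark~\ref{rem:notextrema}, applied with $y=(2t,-t,-t,0,\ldots,0)$ and small $t<0$, there is a point $z_0\in B_\delta(p_+)$ with $V(z_0)<0=V(p_+)$. Its orbit stays in $\bar U$. By Proposition~\ref{prop:global} it converges to a critical point $z_\infty\in\bar U$. Strict descent gives $V(z_\infty)\le V(z_0)<0$. The contradiction will come from showing that every critical point sufficiently close to $p_+$ has $V=0$.

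The step needing care is this last claim. By Theorem~\ref{thm:4l}, the critical set consists of the circles $\mathcal C_A$, on which $V=0$, together with finitely many isolated points, each non-degenerate with $c_{k,r}\ne0$. Since this finite set does not contain $p_+$, it lies at positive distance from $p_+$. Shrinking $\varepsilon$ below that distance, every critical point in $\bar U$ lies on some circle and so has $V=0$. This contradicts $V(z_\infty)<0$, so $p_+$ is not Lyapunov stable.

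For $p_-$ the argument is identical. The cubic term there has the opposite sign, so we take $t>0$ in the same direction to obtain $V(z_0)<0$ arbitrarily close to $p_-$.

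One subtlety is that $\bar F_h$ may fail to be invertible for $h\ge1/\Lambda_n=1/4$. The argument above uses only forward orbits, descent, and convergence, so invertibility is never needed. The main thing to check is therefore only that the isolated critical points avoid a neighbourhood of $p_\pm$, and this is immediate from finiteness.
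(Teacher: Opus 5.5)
Your proof is correct and follows the paper's argument essentially step for step. Finiteness of the isolated critical points (Theorem~\ref{thm:4l}) gives a small closed neighbourhood of $p_+$ in which every critical point lies on a circle and so has $V=0$. Remark~\ref{rem:notextrema} then gives a nearby initial point with $V<0$, whose orbit converges by Proposition~\ref{prop:global}, and descent forces a negative limit value, which is a contradiction. The only blemish is the condition stated in your first paragraph, which is garbled since $p_+$ lies on every circle; your third paragraph replaces it with the correct requirement that $\bar U$ contain no isolated critical point.
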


\begin{proof}
Consider \(p_+\). By Theorem~\ref{thm:4l}, a sufficiently small closed
neighbourhood \(U\) contains no isolated critical point. Every critical
point in \(U\) therefore lies on a critical circle and has potential
zero. If \(p_+\) were Lyapunov stable, an initial condition sufficiently
close to it would have its entire forward orbit in \(U\).
Remark~\ref{rem:notextrema} allows such an initial condition \(z_0\)
with \(V(z_0)<0\). Proposition~\ref{prop:global} makes its orbit
converge to a critical point \(p\in U\), whereas descent implies
\[
 V(p)\le V(z_0)<0.
\]
This contradicts the zero potential of the critical set in \(U\).
The argument at \(p_-\) is identical.
\end{proof}

\section{Uniform twists and attracting states}
\label{sec:attractors}

A \emph{sink} is a hyperbolic fixed point whose multipliers all have
modulus less than one. For the uniform $q$-twist \eqref{eq:uniformtwist}, set
$c_q:=\cos(2\pi q/n)$.

\begin{proposition}[Multipliers of a uniform twist]
\label{prop:twistmultipliers}
At the uniform $q$-twist, the multipliers of $D\bar F_h|_E$ are
$1+hc_q\lambda_j$, $j=1,\ldots,n-1$.
\end{proposition}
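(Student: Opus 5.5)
The proof is a direct computation with the linearisation formula \eqref{eq:linearisation}, or equivalently with the derivative formula $D\bar F_h(z)|_E=I+hKD(z)|_E$ from the proof of Theorem~\ref{thm:descent-invertibility}. At the uniform $q$-twist $z=\frac{2\pi q}{n}\ones$ every edge difference is the same, so $D(z)=c_qI$. The reduced Hessian is therefore $H_z=-c_qP_E|_E=-c_qI_E$, and \eqref{eq:linearisation} gives
\[
 D\bar F_h(z)|_E=I+hc_qK_0 .
\]
Note that the twist is indeed critical, since $\sin z$ is a multiple of $\ones$ and \eqref{eq:critcondition} holds. Also, $D(z)$ is a scalar matrix, so it commutes with $P_E$; this removes any worry about projecting.

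Next I use the spectrum \eqref{eq:spectrum}. The operator $K$ is the circulant cycle Laplacian. Its eigenvectors are the Fourier modes $v_j=(e^{2\pi i jm/n})_m$, or their real and imaginary parts, with eigenvalues $\lambda_j$. For $j\ne0$ these modes are orthogonal to $\ones$, so they lie in $E$. Together with $\lambda_0=0$ on $\ones$, this shows that $K_0=K|_E$ has eigenvalues $\lambda_1,\ldots,\lambda_{n-1}$, counted with multiplicity. The operator $I+hc_qK_0$ is a polynomial in $K_0$, so it is diagonalised by the same basis. Its eigenvalues, which are the multipliers, are therefore $1+hc_q\lambda_j$ for $j=1,\ldots,n-1$.

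There is no real obstacle here. The only point needing care is multiplicity bookkeeping. The eigenvalues $\lambda_j=\lambda_{n-j}$ repeat in pairs, so the list $j=1,\ldots,n-1$ is meant with repetition. Real eigenvectors $\cos$ and $\sin$ modes span each two-dimensional eigenspace, and this accounts for exactly $n-1=\dim E$ multipliers.
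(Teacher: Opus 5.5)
Your proof is correct and is essentially the paper's argument: at the twist $D(z)=c_qI$, so $D\bar F_h(z)|_E=I+hc_qK_0$, and the multipliers follow from the spectrum of $K$ restricted to $E=\ones^\perp$. One minor bookkeeping slip, which does not affect the count: for even $n$ the eigenvalue $\lambda_{n/2}=4$ is simple (its sine mode vanishes), so not every nonzero eigenspace is two-dimensional.
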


\begin{proof}
Here $D(z)=c_qI$, so $D\bar F_h(z)|_E=I+hc_qK_0$; now use
\eqref{eq:spectrum}.
\end{proof}

\begin{theorem}[Local minima and sinks]
\label{thm:minima}
The local minima of $V$ are the uniform twists with $c_q<0$, and such a
twist is a sink if and only if
\begin{equation}
\label{eq:exactsinkrange}
 0<h<\frac{2}{-c_q\Lambda_{n}}
\end{equation}
holds. Under \eqref{eq:lyaprange}, these twists are exactly the sinks of
$\bar F_h$.
\end{theorem}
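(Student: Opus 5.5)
The proof has three parts: identify the local minima from the classification of critical points, compute the exact sink range from Proposition~\ref{prop:twistmultipliers}, and compare that range with the Lyapunov interval.

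\emph{Local minima.} A local minimum of the analytic function $V$ is a critical point, so it appears in Theorem~\ref{thm:classification} (for $4\nmid n$) or Theorem~\ref{thm:4l} (for $4\mid n$). For isolated non-degenerate critical points, being a local minimum means Morse index zero. By \eqref{eq:index-general}, index zero needs either $c_{k,r}<0$ with $k=0$, or $c_{k,r}>0$ with $n-1-k=0$. The second option would force $k=n-1>\lfloor(n-1)/2\rfloor$, which is impossible for $n\ge3$. Hence $k=0$, so $z=\alpha\ones$ with $n\alpha\equiv0$, which is a uniform $q$-twist with $c_q<0$. Conversely, for such a twist the restricted Hessian is $-c_qI$, which is positive definite, so the twist is a strict local minimum.

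It remains, when $4\mid n$, to rule out local minima on the critical circles. Away from $p_\pm$, Proposition~\ref{prop:circle-hessian} gives $(n-2)/2\ge1$ negative Hessian eigenvalues, so those points are not minima. At $p_\pm$, Remark~\ref{rem:notextrema} shows they are not extrema. Uniform twists with $c_q=0$ occur only when $4\mid n$; they are exactly $p_\pm$ and are therefore excluded.

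\emph{Sink criterion.} By Proposition~\ref{prop:twistmultipliers}, the multipliers are $1+hc_q\lambda_j$ for $j=1,\dots,n-1$, with $\lambda_j\in(0,\Lambda_n]$, and $\Lambda_n$ is attained by some $j\ge1$. If $c_q\ge0$, every multiplier is at least $1$, so the twist is not a sink. If $c_q<0$, each multiplier is real and less than $1$, and it exceeds $-1$ exactly when $h(-c_q)\lambda_j<2$. Requiring this for every $j$ is the same as requiring it at $\lambda_j=\Lambda_n$, which gives \eqref{eq:exactsinkrange}. In that case all multipliers lie strictly inside the unit circle, so the fixed point is hyperbolic and a sink.

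\emph{Under \eqref{eq:lyaprange}.} Since $0<-c_q\le1$, the bound $2/\Lambda_n$ is at most $2/(-c_q\Lambda_n)$, so every twist with $c_q<0$ is a sink. Conversely, suppose a sink $p$ is given. By Proposition~\ref{prop:global} it is a critical point, and hyperbolicity makes $D\bar F_h(p)|_E=I-hK_0H_p$ have no eigenvalue equal to $1$, so $H_p$ is non-degenerate. This also excludes $p_\pm$ and the points of the critical circles, where Proposition~\ref{prop:circle-hessian} gives a kernel. Proposition~\ref{prop:hyperbolicity} then says that $\indM(p)$ equals the number of multipliers outside the unit circle, which is zero for a sink. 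So $p$ is a non-degenerate local minimum, hence a twist with $c_q<0$ by the first part.

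The only delicate point is the degenerate case $4\mid n$. For the local-minimum claim it is handled by Remark~\ref{rem:notextrema} at $p_\pm$ and by the negative Hessian eigenvalues on the rest of the circles.
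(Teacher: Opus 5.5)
Your proof is correct and follows the paper's argument. You identify the minima through the index formula \eqref{eq:index-general}, Proposition~\ref{prop:circle-hessian} and Remark~\ref{rem:notextrema}, and you get the sink range from the twist multipliers of Proposition~\ref{prop:twistmultipliers}. The one difference is how you exclude the circles and $p_\pm$ as sinks: you use the hyperbolicity built into the definition of a sink, since a degenerate $H_p$ produces a multiplier equal to one. The paper instead cites Propositions~\ref{prop:circlemultipliers} and~\ref{prop:degenerateunstable}, which give the stronger information that these points have unstable directions or are not even Lyapunov stable.
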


\begin{proof}
For isolated critical points, Theorems~\ref{thm:classification}
and~\ref{thm:4l} show that index zero requires $k=0$ and
$\cos\alpha<0$, that is, a uniform twist with $c_q<0$.
When $n$ is divisible by four, Proposition~\ref{prop:circle-hessian}
gives a negative transverse Hessian direction at every point of a
critical circle away from $p_\pm$, while Remark~\ref{rem:notextrema}
excludes $p_\pm$. Thus there are no other local minima.

By Proposition~\ref{prop:twistmultipliers}, a twist with $c_q<0$ is a
sink exactly when $-hc_q\lambda_j<2$ for every $j$, that is, when
\eqref{eq:exactsinkrange} holds. Every isolated non-minimal critical point has an unstable multiplier by
Proposition~\ref{prop:hyperbolicity}. Points on the critical circles away
from $p_\pm$ have unstable multipliers by
Proposition~\ref{prop:circlemultipliers}, and $p_\pm$ are not Lyapunov
stable by Proposition~\ref{prop:degenerateunstable}. Hence there are no
other sinks. Since $|c_q|\le1$, every local minimum satisfies
\eqref{eq:exactsinkrange} throughout \eqref{eq:lyaprange}.
\end{proof}

\begin{corollary}[Number of sinks]
\label{cor:number-sinks}
Under \eqref{eq:lyaprange}, the number of sinks is
\begin{equation}
\label{eq:numsinks}
 \begin{cases}
  2\left\lfloor\dfrac{n+1}{4}\right\rfloor,
       &n\text{ odd},\\[1.2ex]
  \dfrac{n}{2},&n\equiv2\pmod4,\\[1.2ex]
  \dfrac{n-2}{2},&n\equiv0\pmod4.
 \end{cases}
\end{equation}
\end{corollary}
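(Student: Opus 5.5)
By Theorem~\ref{thm:minima}, under \eqref{eq:lyaprange} the sinks are exactly the uniform $q$-twists with $c_q=\cos(2\pi q/n)<0$. So I will reduce the corollary to counting
\[
 q\in\{0,\ldots,n-1\}\quad\text{with}\quad \frac{\pi}{2}<\frac{2\pi q}{n}<\frac{3\pi}{2},
 \qquad\text{that is,}\qquad \frac n4<q<\frac{3n}{4}.
\]
Distinct $q$ give distinct points of $\mathcal Z$, so this count is exactly the number of sinks.

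I will count the integers in the open interval $(n/4,3n/4)$ by splitting on $n \bmod 4$.

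\emph{Case $n\equiv0\pmod4$.} Both endpoints are integers, and they correspond to $p_\pm$, where $c_q=0$. These must be excluded, which is consistent with $p_\pm$ not being sinks. The count is $3n/4-n/4-1=(n-2)/2$.

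\emph{Case $n\equiv2\pmod4$.} Writing $n=4m+2$, the interval is $(m+\tfrac12,3m+\tfrac32)$. It contains the integers $m+1,\ldots,3m+1$, which number $2m+1=n/2$.

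\emph{Case $n$ odd.} Neither endpoint is an integer. If $n=4m+1$, the interval is $(m+\tfrac14,3m+\tfrac34)$, containing $m+1,\ldots,3m$, i.e.\ $2m$ integers. If $n=4m+3$, the interval is $(m+\tfrac34,3m+\tfrac94)$, containing $m+1,\ldots,3m+2$, i.e.\ $2m+2$ integers. In both subcases the count equals $2\lfloor (n+1)/4\rfloor$: for $n=4m+1$ this is $2m$, and for $n=4m+3$ it is $2(m+1)$.

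The only point needing care is the $4\mid n$ case. There the twists $q=n/4$ and $q=3n/4$ are exactly $p_\pm$, with $c_q=0$. They are correctly excluded, since they are not local minima (Remark~\ref{rem:notextrema}) and not sinks (Proposition~\ref{prop:degenerateunstable}). Otherwise the argument is routine bookkeeping, provided Theorem~\ref{thm:minima} guarantees that every twist with $c_q<0$ is a sink throughout the Lyapunov interval. It does, since $|c_q|\le1$ makes \eqref{eq:exactsinkrange} automatic there.
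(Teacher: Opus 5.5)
Your proposal is correct and follows the paper's own proof: the paper also uses Theorem~\ref{thm:minima} to identify the sinks with the twists satisfying $n/4<q<3n/4$, and then counts these integers, which you do explicitly by the residue of $n$ modulo $4$. Your case counts are correct and agree with Table~\ref{tab:growth}.
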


\begin{proof}
By Theorem~\ref{thm:minima}, the sinks are the uniform twists with
$n/4<q<3n/4$; count these integers.
\end{proof}

\begin{corollary}[Absolute minima]
\label{cor:absolute-minima}
If $n$ is even, $V$ has the unique absolute minimum
\begin{equation}
\label{eq:antiphase}
 z=\pi\ones;
\end{equation}
if $n$ is odd, its absolute minima are
\begin{equation}
\label{eq:absolute-minima}
 z=\left(\pi-\frac{\pi}{n}\right)\ones,
 \qquad
 z=\left(\pi+\frac{\pi}{n}\right)\ones.
\end{equation}
The minimum value is
\begin{equation}
\label{eq:absolutemin}
 \min_{z\in\mathcal Z}V(z)=
 \begin{cases}
  -n,&n\text{ even},\\[.5ex]
  -n\cos\dfrac{\pi}{n},&n\text{ odd}.
 \end{cases}
\end{equation}
\end{corollary}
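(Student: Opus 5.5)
The absolute minima are in particular local minima, so by Theorem~\ref{thm:minima} they lie among the uniform $q$-twists with $c_q<0$. Because $\mathcal Z$ is compact and $V$ is continuous, the minimum is attained. Any point attaining it is a local minimum, so the global minimum is a minimum of the finite set of values $V=nc_q$ taken at these twists. The problem therefore reduces to minimising $\cos(2\pi q/n)$ over integers $q$ with $n/4<q<3n/4$, which is the same as choosing $2\pi q/n$ as close as possible to $\pi$ modulo $2\pi$.

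The two parities are then handled separately.

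\textbf{Even $n$.} Take $q=n/2$. This gives $z=\pi\ones$, which is admissible because $\ones^Tz=n\pi\equiv0\pmod{2\pi}$, and $V=-n$. This value is also the trivial lower bound $V\ge-n$, and equality forces $\cos z_i=-1$ for every $i$, that is, $z=\pi\ones$. So the minimum is unique.

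\textbf{Odd $n$.} Here $2\pi q/n=\pi$ is impossible, since it would require $2q=n$. The distance $|2\pi q/n-\pi|=\pi|2q-n|/n$ is smallest when $|2q-n|=1$, that is, for $q=(n\pm1)/2$. These give $z=(\pi\mp\pi/n)\ones$ and the common value $-n\cos(\pi/n)$. Every other $q$ has $|2q-n|\ge3$, and since cosine is strictly increasing on $[0,\pi]$, those twists have strictly larger potential. Hence exactly these two twists are the absolute minima.

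The only step needing care is the reduction to local minima. It rests on Theorem~\ref{thm:minima}, which already excludes the critical circles and the points $p_\pm$ when $4\mid n$. With that in hand, everything else is elementary.
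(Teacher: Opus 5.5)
Your proof is correct and follows the paper's route: Theorem~\ref{thm:minima} reduces the problem to minimising $V=nc_q$ over the uniform twists with $c_q<0$, and your compactness remark and the direct bound $V\ge-n$ for even $n$ just spell out steps the paper leaves implicit. Two slips that do not affect the argument: $q=(n\pm1)/2$ gives $z=(\pi\pm\pi/n)\ones$, not $(\pi\mp\pi/n)\ones$; and cosine is decreasing on $[0,\pi]$, so the monotonicity you need is that $\delta\mapsto\cos(\pi+\delta)=-\cos\delta$ is strictly increasing on $[0,\pi]$.
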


\begin{proof}
By Theorem~\ref{thm:minima}, it suffices to minimise $V=nc_q$ over the
twists. If $n$ is even, $q=n/2$ is the unique value with $c_q=-1$. If
$n$ is odd, the angles $2\pi q/n$ closest to $\pi$ are $\pi\pm\pi/n$,
and only these minimise the cosine.
\end{proof}

On odd rings, the two absolute minima spread the phase mismatch
uniformly over the edges. Local minima other than the absolute ones
first occur in $C_6$ among even rings and in $C_7$ among odd rings.

For even \(n\), the translation \(z\mapsto z+\pi\ones\) preserves
\(\mathcal Z\) and conjugates \(\bar F_h\) to
\(z\mapsto z-hK\sin z\), the Euler step of the attractive Kuramoto
ring. Under this conjugacy, the condition \(c_q<0\) becomes the
classical \(n/4\) rule for stable twisted states
\cite{WileyStrogatzGirvan2006}; for the discrete map, stability also
requires \eqref{eq:exactsinkrange}.

On even rings both spectral thresholds are sharp: at phase opposition,
\(D\bar F_h|_E=I-hK_0\) is singular at \(h=1/4\) and has the
multiplier \(-1\) at \(h=1/2\). On odd rings, the absolute minima have
\(c_q=-\cos(\pi/n)\) and remain sinks up to
\begin{equation}
\label{eq:odd-stability-gap}
 h=\frac{2}{\cos(\pi/n)\Lambda_n}>\frac{2}{\Lambda_n}.
\end{equation}

Let \(\mathcal S\) be the set of critical points at which the Hessian has
a negative eigenvalue. By Theorem~\ref{thm:minima} and
Proposition~\ref{prop:circle-hessian}, \(\mathcal S\) consists of all
critical points except the local minima and the points \(p_\pm\) (when
\(4\mid n\)). The basin of a fixed point
\(p\) is
\(\mathcal B(p)=\{z\in\mathcal Z:\bar F_h^{\,k}(z)\to p\}\), and
\(\mathcal B(\mathcal S)=\bigcup_{p\in\mathcal S}\mathcal B(p)\).

\begin{proposition}[Strict-saddle basins]
\label{prop:strict-saddle-basins}
Under \eqref{eq:lyaprange}, \(\mathcal B(\mathcal S)\) has zero
measure.
\end{proposition}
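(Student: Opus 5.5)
The plan is a center-stable manifold argument applied locally at every point of \(\mathcal S\), combined with the null-preimage property of Proposition~\ref{prop:null-preimages}. This lets us avoid assuming that \(\bar F_h\) is a diffeomorphism, so the result covers the whole Lyapunov interval and not only the invertibility interval.

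\textbf{Step 1: local centre-stable sets.} At each \(p\in\mathcal S\), write \(D\bar F_h(p)|_E=I-hK_0H_p\) as in \eqref{eq:linearisation}. Its eigenvalues are \(1-h\eta\), where the \(\eta\) are eigenvalues of \(K_0H_p\). As in Proposition~\ref{prop:hyperbolicity}, these \(\eta\) are real, satisfy \(|\eta|\le\Lambda_n\), and have the inertia of \(H_p\). Since \(H_p\) has a negative eigenvalue, there is at least one multiplier \(1-h\eta>1\). All other multipliers lie in \((-1,1]\) under \eqref{eq:lyaprange}. This covers both isolated saddles and points of the critical circles away from \(p_\pm\); for the latter, the argument of Proposition~\ref{prop:circlemultipliers} works for any \(h<2/\Lambda_n\), using only the inertia from Proposition~\ref{prop:circle-hessian}.

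\(D\bar F_h(p)\) may be singular when \(h\ge1/\Lambda_n\). I will therefore use the centre-stable manifold theorem for \(C^1\) maps that need not be invertible, in the form given in \cite{CheriditoJentzenRossmannek2024} or \cite[Thm.~III.7]{Shub}-type statements as used in \cite{LeeSimchowitzJordanRecht2016,PanageasPiliouras2017}. This gives a ball \(U_p\) around \(p\) and an embedded disc \(W_p\) of dimension at most \(n-2\) with the following property: every point whose forward orbit stays in \(U_p\) lies in \(W_p\). In particular \(W_p\) is null.

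\textbf{Step 2: countable cover.} The set \(\mathcal S\) is a subset of the compact critical set with \(p_\pm\) removed, so it is second countable and is covered by countably many of the balls, \(U_{p_1},U_{p_2},\dots\). Take \(z\in\mathcal B(\mathcal S)\). Its orbit converges to some \(p\in\mathcal S\), and \(p\) lies in some \(U_{p_j}\). Here one subtlety arises: convergence to \(p\) only guarantees that the tail of the orbit lies in a neighbourhood of \(p\), not in \(U_{p_j}\) in a way that forces membership in \(W_{p_j}\). To handle this, I will choose the family so that \(p\) is interior with room to spare, e.g. take the \(U_p\) as balls and use the countable subcover of the smaller balls \(\tfrac12U_p\). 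Then the tail \(\bar F_h^m(z)\), \(m\ge m_0\), stays in \(U_{p_j}\), so \(\bar F_h^{m_0}(z)\in W_{p_j}\). It follows that
\[
\mathcal B(\mathcal S)\subset\bigcup_{j}\bigcup_{m\ge0}\bar F_h^{-m}(W_{p_j}).
\]

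\textbf{Step 3: conclusion and main obstacle.} By Proposition~\ref{prop:null-preimages} and induction on \(m\), each set \(\bar F_h^{-m}(W_{p_j})\) is null. A countable union of null sets is null, so \(\mathcal B(\mathcal S)\) has zero measure.

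The main obstacle is Step 1 on the critical circles, where there is a neutral multiplier equal to one and the map may be singular. Two points need checking there. First, the non-invertible centre-stable theorem must genuinely allow multipliers in the closed unit disc, including the multiplier \(1\) and possibly \(0\); it does, since the local set is built as a Lipschitz graph over the centre-stable subspace via a cutoff and a contraction argument. Second, the balls \(U_p\) must be kept away from \(p_\pm\), where the spectral gap degenerates. This is automatic because \(p_\pm\notin\mathcal S\) and each \(U_p\) may be chosen as small as desired.
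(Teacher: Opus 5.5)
Your proposal is correct and follows essentially the paper's proof. Both arguments apply the non-invertible centre-stable theorem of \cite{CheriditoJentzenRossmannek2024} at every point of $\mathcal S$, reduce to countably many neighbourhoods by second countability, and pull the resulting null sets back with Proposition~\ref{prop:null-preimages}. The only difference is that the paper applies the theorem to the lift $\widehat F_h$ on $E$ and uses a lattice argument to pass from convergence on the torus to convergence of the lift, whereas you work in local charts on $\mathcal Z$; also, the Shub-type theorem used in \cite{LeeSimchowitzJordanRecht2016,PanageasPiliouras2017} requires a local diffeomorphism, so only the cited non-invertible version covers saddles with singular derivative when $h\ge1/\Lambda_n$.
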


\begin{proof}
We apply
\cite[Assumption~2.1, Theorem~2.2 and Corollary~2.3]{CheriditoJentzenRossmannek2024}
to the lift of the reduced map. Let $\varpi:E\to\mathcal Z$ be the
covering projection with deck group
$2\pi\mathbb Z^n\cap E$, and define
\[
 \widehat F_h(y)=y+hK\sin y,\qquad y\in E,
\]
so that $\varpi\circ\widehat F_h=\bar F_h\circ\varpi$. Put
$\widehat{\mathcal S}=\varpi^{-1}(\mathcal S)$, a set of fixed points of
$\widehat F_h$.

The cited assumption consists of conditions at each point of an
arbitrary set of fixed points; neither isolation nor countability is
required, and the cited corollary handles uncountable sets by
second countability. Since $\widehat F_h$ is analytic, it is
differentiable at every point of $\widehat{\mathcal S}$, with
first-order Taylor remainder locally $\varepsilon$-Lipschitz for every
$\varepsilon>0$. At lifts of isolated critical points,
Proposition~\ref{prop:hyperbolicity} shows that the derivative is
diagonalisable over $\mathbb R$ with an eigenvalue of modulus greater
than one; at lifts of points of critical circles other than $p_\pm$,
Proposition~\ref{prop:circlemultipliers} gives the same conclusion, the
tangent direction contributing a multiplier equal to one. By the cited
theorem, every point whose forward orbit remains near such a fixed
point lies in a Lipschitz graph over the centre-stable subspace, which
has codimension at least one. The cited corollary, applied on
$E\simeq\mathbb R^{n-1}$, yields a null set $\widehat N\subset E$ such
that every orbit of $\widehat F_h$ converging to a point of
$\widehat{\mathcal S}$ eventually enters $\widehat N$.

Now let $\bar F_h^{\,k}(z_0)\to p\in\mathcal S$, choose a lift $y_0$ of
$z_0$, and put $y_k=\widehat F_h^{\,k}(y_0)$, a lift of
$\bar F_h^{\,k}(z_0)$. Then $\|y_{k+1}-y_k\|=h\|K\sin y_k\|\to0$ and
$\operatorname{dist}(y_k,\varpi^{-1}(p))\to0$. Since $\varpi^{-1}(p)$
is a translate of a lattice,
let $\lambda_k\in\varpi^{-1}(p)$ be a nearest lattice
point to $y_k$. Then
\[
 \|\lambda_{k+1}-\lambda_k\|
 \le \operatorname{dist}(y_{k+1},\varpi^{-1}(p))
    +\|y_{k+1}-y_k\|
    +\operatorname{dist}(y_k,\varpi^{-1}(p))
 \longrightarrow 0.
\]
The lattice has positive minimum separation, so $\lambda_k$ is
eventually constant. Hence $y_k$ converges to a single point of
$\widehat{\mathcal S}$, and therefore
$y_0\in\bigcup_{k\ge0}\widehat F_h^{-k}(\widehat N)$. As $\varpi$ is a
local diffeomorphism, $N:=\varpi(\widehat N)$ is null in $\mathcal Z$, and
\[
 \mathcal B(\mathcal S)\subset\bigcup_{k\ge0}\bar F_h^{-k}(N).
\]
Proposition~\ref{prop:null-preimages} makes every set in this union null.
\end{proof}

\FloatBarrier
\section{The completely degenerate intersections}
\label{sec:parabolic}

Throughout this section \(4\mid n\) and \(0<h<1/2\), the Lyapunov
interval for even rings. The involution \(z\mapsto-z\) commutes with
\(\bar F_h\) and exchanges \(p_+\) and \(p_-\), so it suffices to
treat \(p_+=\frac\pi2\ones\). In a local lift, write \(z=p_++y\) with
\(y\in E\) and set
\[
 G_h(y):=y+hK\cos y,\qquad
 W(y):=V(p_++y)=-\sum_{i=1}^n\sin y_i.
\]
With the quotient metric \(g\) of \eqref{eq:metric}, define
\[
 P_3(y):=\frac16\sum_{i=1}^ny_i^3,
 \qquad
 S_g:=\{u\in E:\|u\|_g=1\},
 \qquad
 \phi:=P_3|_{S_g}.
\]
For \(y\ne0\), write \(y=ru\) with \(r=\|y\|_g\) and \(u\in S_g\),
and \((r^+,u^+)\) for the polar coordinates of \(G_h(y)\). We denote by
\(\Delta_g\) the Laplacian of \(g\) and by
\(\operatorname{grad}_{S_g}\) the gradient on \(S_g\).

The argument runs as follows. At $p_+$ the derivative of $\bar F_h$ is
the identity, so linearisation gives no information, and we blow up the
origin. To leading order $G_h$ is a gradient step for the cubic $P_3$:
in polar coordinates the radius decreases at the rate
$3h\phi(u)r^2$, while the direction $u$ follows the gradient flow of
$\phi$ on $S_g$, slowed down by the factor $r$
(Lemma~\ref{lem:local-normal-form}). Along an orbit converging to
$p_+$, $\phi(u_k)$ tends to a critical value $\kappa\ge0$
(Lemma~\ref{lem:angular-limit}). If $\kappa>0$, the orbit approaches
$p_+$ at the polynomial rate $r_k\sim(3h\kappa k)^{-1}$; since $P_3$ is
harmonic in the metric $g$, the map preserves volume to first order, and
a change-of-variables argument shows that these orbits form a null set.
If $\kappa=0$, the direction $u_k$ converges to one of the finitely many
points of $\mathcal D_0$, which are the directions of the critical
circles at $p_+$ (Lemma~\ref{lem:zero-directions}). These are
non-degenerate saddles of $\phi$ lying on rays fixed by $G_h$, and the
slow-time dichotomy of Lemma~\ref{lem:slow-dichotomy} shows that the
angular deviation decays too fast for $r_k$ to tend to zero. The two
cases together give Proposition~\ref{prop:local-degenerate-basin}.

\begin{lemma}[Local normal form]
\label{lem:local-normal-form}
As \(y\to0\) in \(E\),
\begin{align}
 G_h(y)&=y-h\gradg P_3(y)+O(\|y\|_g^4),
 \label{eq:local-map-cubic}\\
 W(y)&=P_3(y)-\frac1{120}\sum_{i=1}^ny_i^5+O(\|y\|_g^7).
 \label{eq:local-potential-cubic}
\end{align}
Moreover,
\begin{equation}
\label{eq:harmonic-cubic}
 \Delta_gP_3=0,
\end{equation}
and, for \(y=ru\), the blown-up map satisfies
\begin{align}
 r^+&=r-3h\phi(u)r^2
 +\frac{h^2}{2}\|\operatorname{grad}_{S_g}\phi(u)\|_g^2r^3
 +O(r^4),
 \label{eq:radial-blowup}\\
 u^+&=u-hr\operatorname{grad}_{S_g}\phi(u)+O(r^2).
 \label{eq:angular-blowup}
\end{align}
\end{lemma}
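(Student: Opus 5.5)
The plan is to prove the five statements in turn, all by direct Taylor expansion around \(p_+\), with the \(g\)-gradient formula \(\gradg=K\nabla\) on \(E\) (from Section~\ref{subsec:quotient}) as the main tool.

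\emph{The potential.} Since \(\sin(\pi/2+y_i)=\cos y_i\), we have \(W(y)=\sum_i\cos(\pi/2+y_i)=-\sum_i\sin y_i\). Expanding \(\sin y_i=y_i-y_i^3/6+y_i^5/120-\cdots\) and using \(\sum_iy_i=0\) on \(E\) gives \eqref{eq:local-potential-cubic}. Norm equivalence on the finite-dimensional space \(E\) lets us write every remainder as \(O(\|y\|_g^k)\).

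\emph{The map.} Here \(G_h(y)=y+hK\cos y\), and we expand \(\cos y=\ones-\frac12 y^{\circ2}+O(|y|^4)\), where \(y^{\circ2}\) is the componentwise square. Because \(K\ones=0\), this gives \(G_h(y)=y-\frac h2Ky^{\circ2}+O(|y|^4)\). On the other hand, the Euclidean gradient of \(P_3\) on \(\mathbb R^n\) is \(\frac12y^{\circ2}\). The \(g\)-gradient of the restriction to \(E\) is \(K_0P_E\nabla P_3=K\nabla P_3\), since \(K\) kills \(\ones\) and maps into \(E\). Hence \(\gradg P_3=\frac12Ky^{\circ2}\), which is \eqref{eq:local-map-cubic}. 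This matches \(\gradg V=-K\sin z\) evaluated at \(p_++y\).

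\emph{Harmonicity.} For a quadratic form given by a matrix \(M\) on the ambient space, \(\Delta_g\) acting on the restriction to \(E\) is \(\operatorname{tr}(K_0P_EMP_E|_E)\). This can be checked in a \(g\)-orthonormal basis \(K_0^{1/2}\)-transported from a Euclidean one. The Hessian of \(P_3\) is \(\operatorname{diag}(y)\), so
\[
\Delta_gP_3(y)=\operatorname{tr}\bigl(K\operatorname{diag}(y)\bigr)=\sum_iK_{ii}y_i=2\sum_iy_i=0 \quad\text{on }E.
\]
This step uses that the diagonal of \(K\) is constant, equal to \(2\). I consider this the conceptual core, but the computation is short.

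\emph{The blow-up formulas.} These are the most bookkeeping-heavy part. Write \(v=\gradg P_3(u)\), which is homogeneous of degree two, so that \(G_h(ru)=ru-hr^2v+O(r^4)\). Euler's identity gives \(g(u,v)=3\phi(u)\). Split \(v=3\phi(u)u+\operatorname{grad}_{S_g}\phi(u)\). The tangential part is \(v-g(u,v)u\), and it equals the spherical gradient because \(\|u\|_g=1\).

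Then
\[
\|G_h(y)\|_g^2=r^2-6h\phi r^3+h^2\|v\|_g^2r^4+O(r^5).
\]
Taking the square root gives
\[
r^+=r-3h\phi r^2+\tfrac{h^2}{2}\bigl(\|v\|_g^2-9\phi^2\bigr)r^3+O(r^4).
\]
By Pythagoras, \(\|v\|_g^2-9\phi^2=\|\operatorname{grad}_{S_g}\phi\|_g^2\), which yields \eqref{eq:radial-blowup}. The \(O(r^4)\) error from the map contributes only at order \(r^4\) after dividing by \(r\).

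For the direction, \(u^+=G_h(y)/r^+\). Dividing, we get \((u-hrv)(1+3h\phi r)+O(r^2)=u-hr(v-3\phi u)+O(r^2)\), which is \eqref{eq:angular-blowup}.

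The main thing to watch is that the \(O(r^4)\) map error does not contaminate the \(r^3\) radial term. It does not, because of its order.
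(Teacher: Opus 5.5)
Your proposal is correct and follows essentially the same route as the paper: Taylor expansion using $K\ones=0$, the identification $\gradg P_3(y)=\frac12K(y^{\circ2})$, the trace computation $\operatorname{tr}(K\operatorname{diag}y)=2\sum_iy_i=0$, and the $g$-orthogonal splitting $\gradg P_3(u)=3\phi(u)u+\operatorname{grad}_{S_g}\phi(u)$, followed by taking the square root and normalising. Your bookkeeping of the expansion coefficients checks out, including the identity $\|v\|_g^2-9\phi^2=\|\operatorname{grad}_{S_g}\phi\|_g^2$.
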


\begin{proof}
Componentwise Taylor expansion gives
\[
 \cos y=\ones-\frac12y^{\circ2}+\frac1{24}y^{\circ4}
 +O(\|y\|^6).
\]
Since \(K\ones=0\),
\[
 G_h(y)=y-\frac h2K(y^{\circ2})
       +\frac h{24}K(y^{\circ4})+O(\|y\|^6).
\]
Since \(DP_3(y)v=\frac12\sum_i y_i^2v_i\) for \(v\in E\), we have
\(\gradg P_3(y)=\frac12K(y^{\circ2})\), which proves
\eqref{eq:local-map-cubic}.  Expanding \(-\sum_i\sin y_i\) and using
\(\sum_i y_i=0\) gives \eqref{eq:local-potential-cubic}.

The Hessian of \(P_3\) is the bilinear form
\(D^2P_3(y)[v,w]=\sum_i y_iv_iw_i\), and the inverse metric on \(E\) is
\(K_0\); hence
\[
 \Delta_gP_3(y)=\operatorname{tr}_E(K_0\operatorname{diag}y)
 =\operatorname{tr}(K\operatorname{diag}y)
 =2\sum_i y_i=0.
\]
Finally, put \(\xi(u)=\operatorname{grad}_{S_g}\phi(u)\). By homogeneity,
\(\gradg P_3(u)=3\phi(u)u+\xi(u)\), and the two terms are
\(g\)-orthogonal because \(\xi(u)\) is tangent to \(S_g\). Hence
\[
 \frac{\|G_h(ru)\|_g^2}{r^2}
 =1-6h\phi(u)r
   +h^2r^2\bigl(9\phi(u)^2+\|\xi(u)\|_g^2\bigr)+O(r^3).
\]
Taking the square root and normalising \(G_h(ru)\) proves
\eqref{eq:radial-blowup} and \eqref{eq:angular-blowup}.
The remainders are uniform on $S_g$.
\end{proof}

The zero-level angular critical set is
\[
 \mathcal D_0:=\{u\in S_g:
 \operatorname{grad}_{S_g}\phi(u)=0,\ \phi(u)=0\}.
\]

\begin{lemma}[Zero-level directions]
\label{lem:zero-directions}
Every \(u\in\mathcal D_0\) has coordinates of equal absolute value,
\(n/2\) of them positive. Each point of \(\mathcal D_0\) is a
non-degenerate saddle of \(\phi\), with \((n-2)/2\) positive and
\((n-2)/2\) negative Hessian eigenvalues.
\end{lemma}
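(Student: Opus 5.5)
Critical points of $\phi=P_3|_{S_g}$ are the solutions of the Lagrange condition $\gradg P_3(u)=\mu u$ with $\|u\|_g=1$. By the homogeneity identity $\gradg P_3(u)=3\phi(u)u+\operatorname{grad}_{S_g}\phi(u)$ from the proof of Lemma~\ref{lem:local-normal-form}, a point $u\in\mathcal D_0$ satisfies $\gradg P_3(u)=0$. Using $\gradg P_3(u)=\frac12K(u^{\circ2})$ and $\ker K=\operatorname{span}\{\ones\}$, this means $u_i^2$ is independent of $i$. Write $u_i=\pm a$ with $a>0$. The constraint $\ones^Tu=0$ then forces exactly $n/2$ positive coordinates, so $u=a(\ones-2\ones_A)$ with $|A|=n/2$. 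This is the tangent vector $w$ of $\mathcal C_A$ at $p_+$ from Proposition~\ref{prop:circle-hessian}. Conversely, every such vector has $\phi(u)=\frac16a^3\sum_i(\pm1)=0$ and $\gradg P_3(u)=\frac{a^2}{2}K\ones=0$, so it lies in $\mathcal D_0$. This proves the first assertion.

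For the Hessian, I use the standard formula for the Hessian of the restriction of a homogeneous function to a sphere at a critical point $u$ with $\gradg P_3(u)=\mu u$: on $T_uS_g=\{v\in E:g(u,v)=0\}$,
\[
 \operatorname{Hess}\phi(u)[v,v]=D^2P_3(u)[v,v]-\mu\|v\|_g^2 .
\]
Here $\mu=3\phi(u)=0$, so it reduces to $D^2P_3(u)[v,v]=\sum_i u_iv_i^2=a\bigl(\sum_{i\notin A}v_i^2-\sum_{i\in A}v_i^2\bigr)$ on the $g$-orthogonal complement of $u$ in $E$. This is, up to sign and the factor $a$ in place of $c$, exactly the form analysed in Proposition~\ref{prop:circle-hessian}. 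Decompose $E$ into $E_A$ (zero-sum vectors supported on $A$), $E_{A^c}$, and the line spanned by $w=\ones-2\ones_A\propto u$. The form is positive definite on $E_{A^c}$, negative definite on $E_A$, each of dimension $(n-2)/2$, and vanishes on $w$. Moreover $D^2P_3(u)[w,v]=\sum_i u_iw_iv_i=a\sum_iv_i=0$ for every $v\in E$, so $w$ spans the kernel of the form on $E$. Hence the form on $E$ has inertia $((n-2)/2,(n-2)/2,1)$.

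To pass to $T_uS_g$, note that $T_uS_g$ is a hyperplane complementary to $\operatorname{span}\{u\}$. Since $u$ lies in the radical of the form, the restriction to any complement of $u$ has the same positive and negative indices and is non-degenerate. This yields a non-degenerate saddle with $(n-2)/2$ positive and $(n-2)/2$ negative eigenvalues. Eigenvalues are taken with respect to $g$ on $T_uS_g$, and by Sylvester's law only the inertia matters.

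The main point needing care is the sphere-Hessian formula with the Lagrange multiplier. I would verify it directly by expanding $P_3$ along the curve $\gamma(t)=(u+tv)/\|u+tv\|_g$, which is a second-order retraction, and using $\mu=0$. This shows that the normalisation contributes no correction at second order. Everything else reduces to the computations already made in Proposition~\ref{prop:circle-hessian}.
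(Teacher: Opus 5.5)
Your proposal is correct and follows essentially the same route as the paper: homogeneity reduces $\mathcal D_0$ to the solutions of $K(u^{\circ 2})=0$, and the inertia of $\phi$ at such $u$ comes from the block decomposition of $\sum_i u_iv_i^2$ on $E$, whose kernel is $\operatorname{span}\{u\}$, restricted to the complement $T_uS_g$. You also make explicit two points the paper leaves implicit: the Lagrange multiplier $\mu=3\phi(u)$ vanishes, so the spherical Hessian is just $D^2P_3(u)$ on $T_uS_g$, and $w$ lies in the radical of the form.
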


\begin{proof}
At a critical point of the spherical restriction, homogeneity gives
\[
 \gradg P_3(u)=3\phi(u)u.
\]
For \(u\in\mathcal D_0\), this becomes \(K(u^{\circ2})=0\), so
\(u_i^2\) is constant because \(\ker K=\operatorname{span}\{\ones\}\),
and \(\sum_iu_i=0\) forces \(n/2\) coordinates of each sign:
\[
 u=a_\varepsilon\varepsilon,\qquad
 \varepsilon_i\in\{-1,1\},\qquad
 \sum_i\varepsilon_i=0,\qquad
 a_\varepsilon
 =\bigl(\varepsilon^TK_0^{-1}\varepsilon\bigr)^{-1/2}.
\]
Conversely, each such vector lies in \(\mathcal D_0\), since
\(K(u^{\circ2})=0\) and \(P_3(u)=0\). In particular \(\mathcal D_0\)
is finite.

The Hessian form of \(P_3\) is
\[
 D^2P_3(u)[v,v]=\sum_i u_iv_i^2.
\]
On \(E\) it has \((n-2)/2\) positive directions,
\((n-2)/2\) negative directions and one-dimensional kernel
\(\operatorname{span}\{u\}\), by the same block decomposition as in the
proof of Proposition~\ref{prop:circle-hessian}.  The tangent space
\(T_uS_g\) is a complement to that kernel, so the restricted Hessian is
non-degenerate with the stated inertia.
\end{proof}

For an orbit \(y_k=G_h^k(y_0)\) with \(y_k\to0\) and \(y_k\ne0\) for
all \(k\), write \(r_k=\|y_k\|_g\) and \(u_k=y_k/r_k\).

\begin{lemma}[Angular limit]
\label{lem:angular-limit}
For every such orbit, there is \(\kappa\ge0\) such that
\begin{equation}
\label{eq:angular-critical-limit}
 \phi(u_k)\to \kappa,
 \qquad
 \operatorname{dist}\bigl(u_k,
 \operatorname{Crit}(\phi)\cap\phi^{-1}(\kappa)\bigr)\to0.
\end{equation}
\end{lemma}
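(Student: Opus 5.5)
The plan is to combine the monotone potential with the blow-up expansions of Lemma~\ref{lem:local-normal-form}. First, since \(y_k\to0\) and \(W\) strictly decreases along non-stationary orbits (Theorem~\ref{thm:descent-invertibility}), with \(W(0)=0\), we get \(W(y_k)>0\) for every \(k\): otherwise some \(W(y_m)<0\), and then \(W(y_k)\le W(y_m)<0\) for all \(k\ge m\), contradicting \(W(y_k)\to0\). By \eqref{eq:local-potential-cubic}, \(W(y_k)=r_k^3\phi(u_k)+O(r_k^5)\), so \(\phi(u_k)\ge -Cr_k^2\). Hence every accumulation value of \(\phi(u_k)\) is non-negative.

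Second, I need \(\phi(u_k)\) to converge and \(\operatorname{grad}_{S_g}\phi(u_k)\to0\). From \eqref{eq:angular-blowup} and smoothness of \(\phi\) on the compact sphere,
\[
 \phi(u_{k+1})=\phi(u_k)-hr_k\|\xi(u_k)\|_g^2+O(r_k^2),
 \qquad \xi:=\operatorname{grad}_{S_g}\phi .
\]
So \(\phi(u_k)\) is an approximate descent sequence in the slow time \(s_k=\sum_{j<k}hr_j\), with errors \(O(r_k^2)=O(r_k)\cdot r_k\). The key summability input is to control \(\sum r_k^2\). I would obtain it from the radial equation \eqref{eq:radial-blowup}: since \(r_{k+1}-r_k=-3h\phi(u_k)r_k^2+O(r_k^3)\), the increments of \(r\) are of order \(r_k^2\). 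Alternatively, I can use the exact descent of \(W\) in Theorem~\ref{thm:descent-invertibility}. Summing \(c_h\|\gradg W(y_k)\|_g^2\le W(y_k)-W(y_{k+1})\) over the tail gives
\[
 \sum_{j\ge k}\|\gradg W(y_j)\|_g^2\le W(y_k)/c_h=O(r_k^3).
\]
Since \(\|\gradg W(y_j)\|_g^2=r_j^4(9\phi(u_j)^2+\|\xi(u_j)\|_g^2)+O(r_j^5)\), this yields
\[
 \sum_j r_j^4\|\xi(u_j)\|_g^2<\infty .
\]
This is weaker than what the descent of \(\phi\) needs, so instead I would use \(W\) rescaled. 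The quantity \(\phi(u_k)=W(y_k)/r_k^3+O(r_k^2)\), and its one-step change is governed by the display above. Writing \(\phi(u_{k+1})+Cr_{k+1}\) or a similar corrected quantity, the plan is to show it is eventually non-increasing up to a summable error. This requires \(\sum r_k^2<\infty\) along the orbit, or a variant that absorbs the \(O(r_k^2)\) error into the descent term.

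This step is the main obstacle. When \(\phi(u_k)\) stays bounded below by a positive constant, \(r_k\sim(3h\kappa k)^{-1}\) and \(\sum r_k^2<\infty\) is immediate. The difficulty is the regime where \(\phi(u_k)\) is small. I would try to split the argument here. Because \(\phi\) is a polynomial, hence analytic, it has finitely many critical values on \(S_g\). Away from a small neighbourhood of \(\operatorname{Crit}(\phi)\) we have \(\|\xi\|\ge\delta\), so each step lowers \(\phi\) by at least \(h\delta^2r_k/2\) once \(r_k\) is small, and this dominates the \(O(r_k^2)\) error. Consequently \(\phi(u_k)\) cannot oscillate between two distinct levels infinitely often without crossing non-critical regions. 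Each crossing costs a definite amount of \(\phi\) in decrease, but it would require increases that only accumulate near critical points, where the increase per step is \(O(r_k^2)\).

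Finally, I would combine the \(\sum r_k=\infty\) property with this dichotomy. Since \(r_k\to0\) with increments \(O(r_k^2)\), the sum \(\sum r_k\) diverges: indeed \(1/r_{k+1}-1/r_k=O(1)\), so \(r_k\gtrsim1/k\). It follows that the slow time is infinite and the orbit cannot remain in a non-critical region forever. With finitely many critical values, a standard argument for approximate gradient flows with vanishing step size then gives that the \(\omega\)-limit set of \((u_k)\) is connected, lies in \(\operatorname{Crit}(\phi)\), and is contained in a single level. That level \(\kappa\) is the limit of \(\phi(u_k)\), and \(\kappa\ge0\) by the first step.
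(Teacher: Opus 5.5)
Your argument is correct in substance, but it follows a different route from the paper. The paper never tracks \(\phi(u_k)\) step by step. It interpolates \((u_k)\) in the slow time \(\tau_k=h\sum_{j<k}r_j\). An Euler--Gronwall estimate then shows that this interpolation is an asymptotic pseudotrajectory of the flow of \(-\operatorname{grad}_{S_g}\phi\): over a slow-time window of length \(T\) the accumulated error is \(C\sum a_j^2\le C_T\sup_{j\ge k}a_j\). The paper then cites Bena\"im's limit-set theorem (internal chain transitivity) and his Proposition~6.4, with Sard's theorem supplying the empty-interior hypothesis on the critical values.

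You instead run a direct discrete Lyapunov argument, using the fact that wherever \(\|\xi\|_g\ge\delta\) the descent \(hr_k\delta^2\) dominates the \(O(r_k^2)\) error. Both approaches control the error only locally in slow time. You can therefore drop the detours through \(\sum r_k^2<\infty\) and through \(\sum_j r_j^4\|\xi(u_j)\|_g^2\). The first is not available a priori, since \(r_k\) may decay slowly while \(\phi(u_k)\) is near zero. The inputs you actually need are the ones the paper uses: the uniform one-step expansion, \(\sum r_k=\infty\), and the sign information from \(W(y_k)>0\). Your route is self-contained and elementary; the paper's is shorter given the cited theory.

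Two steps need to be stated more precisely.

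First, oscillation of \(\phi(u_k)\) is not excluded because increases near \(\operatorname{Crit}(\phi)\) are \(O(r_k^2)\) per step; without \(\sum r_k^2<\infty\) their total is not controlled. What works is a band-crossing argument:
\begin{itemize}
\item If \(\liminf\phi(u_k)<\limsup\phi(u_k)\), choose a band \(\phi^{-1}([c-\eta,c+\eta])\) of regular values strictly between them. Sard's theorem and closedness of the set of critical values suffice for this; finiteness is not needed.
\item On this band \(\|\xi\|_g\ge\delta\), so for large \(k\) every step that starts in the band lowers \(\phi\).
\item Since \(|\phi(u_{k+1})-\phi(u_k)|=O(r_k)<2\eta\), the sequence cannot jump over the band either, so it is never crossed upwards.
\end{itemize}

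Second, the closing ``standard argument'' should be written out. Suppose some accumulation point \(u^*\) had \(\xi(u^*)\neq0\). Because \(d_g(u_{k+1},u_k)=O(r_k)\) and \(\sum r_k=\infty\), each of the infinitely many visits near \(u^*\) starts a stretch, of slow-time length bounded below, inside a ball where \(\|\xi\|_g\ge\delta\). Each such stretch lowers \(\phi\) by a fixed amount, which contradicts the convergence of \(\phi(u_k)\) just established.
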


\begin{proof}
Strict descent and \(W(y_k)\to W(0)=0\) imply \(W(y_k)>0\)
for all \(k\).  By \eqref{eq:local-potential-cubic},
\[
 W(r_ku_k)=r_k^3\bigl(\phi(u_k)+O(r_k^2)\bigr),
\]
so every angular accumulation point has non-negative \(\phi\)-value.

By \eqref{eq:radial-blowup}, \(|r_{k+1}-r_k|\le Cr_k^2\); since
\(r_{k+1}/r_k\to1\), it follows that \(|1/r_{k+1}-1/r_k|\le C'\) for
large \(k\). Thus \(r_k\ge c_0/(k+1)\) eventually, with \(c_0>0\), and
\(\sum_k r_k=\infty\).

Put \(a_k=hr_k\) and introduce the slow time
\[
 \tau_k=\sum_{j<k}a_j.
\]
Then \(a_k\to0\), \(\sum_k a_k=\infty\), and
\eqref{eq:angular-blowup} takes the form
\[
 u_{k+1}=u_k-a_k\operatorname{grad}_{S_g}\phi(u_k)+O(a_k^2),
\]
with a uniform remainder.  Join consecutive points by the short
geodesic on \(S_g\), parametrised over
\([\tau_k,\tau_{k+1}]\), and denote the resulting interpolation by
\(\widehat u(\tau)\).  The vector field
\(-\operatorname{grad}_{S_g}\phi\) is smooth on the compact sphere.
For each fixed \(T>0\), the usual local Euler estimate followed by
a discrete Gronwall inequality gives
\[
 \sup_{0\le t\le T}
 d_g\bigl(\widehat u(\tau_k+t),
              \Phi_t(u_k)\bigr)
 \le C_T\sup_{j\ge k}a_j\longrightarrow0,
\]
where \(\Phi_t\) is the flow of
\(\dot u=-\operatorname{grad}_{S_g}\phi(u)\).
Indeed, the accumulated one-step error over any slow-time interval of
length at most \(T+\sup a_j\) is bounded by
\(C\sum a_j^2\le C_T\sup_{j\ge k}a_j\).
Thus \(\widehat u\) is a precompact asymptotic pseudotrajectory of
\(\Phi\).

By the limit-set theorem
\cite[Theorem~5.7(i)]{Benaim1999}, its limit set is internally
chain transitive.  Since \(\phi\) decreases strictly along
non-stationary trajectories of \(\Phi\) and, by Sard's theorem, its set
of critical values has empty interior,
\cite[Proposition~6.4]{Benaim1999} shows that the limit set lies in
\(\operatorname{Crit}(\phi)\) and that \(\phi\) is constant there.
The sequence \((u_k)\) has the same limit set as its interpolation,
because \(d_g(u_{k+1},u_k)=O(a_k)\to0\), and the constant value \(\kappa\)
is non-negative by the first paragraph.
\end{proof}

For a sufficiently small ball \(U\subset E\) centred at the origin, let
\[
 \mathcal B^+_{\rm loc}:=
 \{y\in U:G_h^k(y)\in U\ \text{for all }k\ge0,
                   \ G_h^k(y)\to0\}.
\]

\begin{proposition}[Local basin of the degenerate intersection]
\label{prop:local-degenerate-basin}
The set \(\mathcal B^+_{\rm loc}\) has zero measure.
\end{proposition}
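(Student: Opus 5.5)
Every orbit in \(\mathcal B^+_{\rm loc}\) either reaches \(0\) in finitely many steps or has \(y_k\ne0\) for all \(k\), so Lemma~\ref{lem:angular-limit} applies and produces a limit value \(\kappa\ge0\). I split the basin accordingly. Orbits that hit \(0\) exactly lie in \(\bigcup_k G_h^{-k}(\{0\})\). This set is null by the local analogue of Proposition~\ref{prop:null-preimages}, since \(\det DG_h\) is analytic and not identically zero. The remaining orbits form two sets, \(\mathcal B_{>0}\) (orbits with \(\kappa>0\)) and \(\mathcal B_0\) (orbits with \(\kappa=0\)), and I show each is null.

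\emph{Case \(\kappa>0\).} By \eqref{eq:radial-blowup}, \(1/r_{k+1}=1/r_k+3h\phi(u_k)+O(r_k)\). Since \(\phi(u_k)\to\kappa\), this gives \(r_k\sim(3h\kappa k)^{-1}\). To show \(\mathcal B_{>0}\) is null, I fix a rational \(\kappa_0>0\) and a small \(\delta\). I then consider the set \(A\) of points whose forward orbits stay in \(U\) and satisfy \(\phi(u_k)\ge\kappa_0\) for all \(k\ge k_0\). This set is forward invariant after \(k_0\) steps, and \(\mathcal B_{>0}\) is a countable union of preimages of such sets, which are null once \(A\) is, again by the null-preimage argument. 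For the volume estimate I use harmonicity \eqref{eq:harmonic-cubic}. We have \(DG_h(y)=I-h\,\mathrm{Hess}_gP_3(y)+O(\|y\|^3)\), and \(\operatorname{tr}\) of the \(g\)-Hessian is \(\Delta_gP_3=0\). Hence \(\log\det DG_h(y)=O(\|y\|^2)\), using \(\operatorname{tr}(M^2)=O(\|y\|^2)\). Along orbits in \(A\) this is \(O(k^{-2})\), which is summable, so \(G_h^k\) distorts volume on \(A\) by a uniformly bounded factor. Then \(\mu(A)\le C\mu(G_h^k(A))\le C\,\mu(\{\|y\|_g\le c/k\})\to0\), because \(G_h^k(A)\) lies in a ball of radius \(O(1/k)\). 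Making this rigorous requires \(G_h\) to be injective near \(0\). This holds since \(DG_h(0)=I\), so \(G_h\) is a local diffeomorphism on \(U\). I expect this case to be routine once the summable Jacobian bound is in place.

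\emph{Case \(\kappa=0\).} By Lemma~\ref{lem:angular-limit}, \(u_k\) accumulates on \(\operatorname{Crit}(\phi)\cap\phi^{-1}(0)=\mathcal D_0\), which is finite by Lemma~\ref{lem:zero-directions}. Because \(d(u_{k+1},u_k)\to0\), \(u_k\) converges to a single \(u_*\in\mathcal D_0\). The ray \(\mathbb R_+u_*\) is tangent to the critical circle through \(u_*\), but I expect the ray itself to be \(G_h\)-invariant. Indeed, for \(y=tu_*\) we have \(y^{\circ2}\propto\ones\), so \(K\cos y=0\) and every point of the ray is fixed. This is the reason the rays fixed by \(G_h\) appear in the paper. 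Write \(u_k=u_*+v_k\) with \(v_k\in T_{u_*}S_g\). The angular recursion becomes \(v_{k+1}=(I-a_kM)v_k+O(a_k|v_k|^2)+O(a_k r_k|v_k|)\), where \(M\) is the non-degenerate saddle Hessian. The error terms carry the factor \(|v_k|\) because the ray is invariant. For the radius, \(\phi(u_*+v)=O(|v|^2)\) and \(\operatorname{grad}\phi=O(|v|)\), so \(r_{k+1}=r_k(1+O(r_k|v_k|^2))\). If \(\sum_k r_k|v_k|^2<\infty\), then \(r_k\) converges to a positive limit, contradicting \(y_k\to0\). So it suffices to show that \(|v_k|^2\) decays summably against \(a_k\). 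This is where I would invoke the slow-time dichotomy, Lemma~\ref{lem:slow-dichotomy}. Along an orbit with \(v_k\to0\), the unstable component must stay subordinate to the stable one, and the stable component contracts like \(\exp(-c\tau_k)\) in slow time. This gives \(\sum a_k|v_k|^2\lesssim\sum a_ke^{-2c\tau_k}<\infty\), so \(\mathcal B_0\) contains no orbits with \(r_k\to0\) other than those handled separately.

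The main obstacle is this case, specifically making the dichotomy quantitative enough. The dichotomy has to be applied with the coupled radial equation, and it must hold without assuming \(\sum a_k<\infty\) a priori. The quantity \(\tau_k\) may diverge only logarithmically, and the exponential-in-\(\tau\) decay has to beat the possibly slow growth of \(\tau_k\). I would first try to prove exponential decay of \(|v_k|\) in \(\tau_k\) directly from the stated dichotomy lemma, then close the argument with the summation bound above.
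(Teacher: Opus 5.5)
Your outline follows the paper's proof. It uses the same split by the angular limit $\kappa$. For $\kappa>0$ it uses the trace cancellation from $\Delta_gP_3=0$, giving $\log\det DG_h=O(\|y\|_g^2)$ and a volume contradiction. For $\kappa=0$ it uses convergence of $u_k$ to a point of $\mathcal D_0$, the pointwise-fixed ray, Lemma~\ref{lem:slow-dichotomy}, and summability of $\log r_{k+1}-\log r_k$.

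\textbf{The gap: the radial estimate.} The claim $r_{k+1}=r_k(1+O(r_k|v_k|^2))$ is not justified as written. From \eqref{eq:radial-blowup}, with $\phi=O(|v|^2)$ and $\operatorname{grad}_{S_g}\phi=O(|v|)$, you only get $r_{k+1}=r_k\bigl(1+O(r_k|v_k|^2)+O(r_k^3)\bigr)$. The $O(r^4)$ remainder there is uniform on $S_g$ but carries no factor of $|v|$. In this case you only know $r_k\ge c_0/(k+1)$, so $\sum_kr_k^3$ may diverge, and no contradiction with $r_k\to0$ follows. You must show that the remainder vanishes on the ray:
\begin{itemize}
\item Write $G_h(ru)=ru+r^2Q(r,u)$ with $Q$ smooth down to $r=0$. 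This gives $\rho(r,\eta)=r+r^2q(r,\eta)$ with $q$ smooth.
\item The exact identity $\rho(r,0)=r$, from your fixed ray, forces $q(r,0)=0$. Hence $|\rho(r,\eta)-r|\le Cr^2\|\eta\|$.
\item This linear bound already suffices against $\|\eta_k\|\le Ce^{-\gamma\tau_k}$.
\end{itemize}
The paper proves the stronger quadratic bound you state by also checking that $D_\eta\rho(r,0)=0$.

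\textbf{Two smaller points.}

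For $\kappa>0$, your countable family of sets in place of Egorov's theorem works, but each set must also require $r_k\le\epsilon(\kappa_0)$ for $k\ge k_0$. The ball $U$ is fixed before $\kappa_0$ is chosen. Without this condition, the higher-order terms of \eqref{eq:radial-blowup} can offset the term $-3h\kappa_0r^2$. You would then lose the uniform bound $r_k=O(1/k)$, which you need both to place $G_h^k(A)$ in shrinking balls and to bound the Jacobians of $G_h^k$ uniformly from below on $A$.

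For $\kappa=0$, the obstacle you anticipate does not arise:
\begin{itemize}
\item $\sum_ka_k=\infty$ is already part of Lemma~\ref{lem:angular-limit}.
\item The dichotomy lemma applies to the angular recursion alone. The $r_k$-dependent terms are absorbed into $R_k=o(a_k)$, so no coupling to the radial equation is needed.
\item Since $a_ke^{-2\gamma\tau_k}\le e^{2\gamma a_k}\int_{\tau_k}^{\tau_{k+1}}e^{-2\gamma s}\,ds$, the sum $\sum_ka_ke^{-2\gamma\tau_k}$ is bounded however slowly $\tau_k$ grows. This is exactly the paper's telescoping step.
\end{itemize}
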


\begin{proof}
Choose a convex ball \(U\) small enough that
\(\sup_U\|DG_h-I\|<1/2\).  Integrating \(DG_h-I\) along the
segment joining any two points of \(U\) shows that
\[
 \|G_h(x)-G_h(y)\|\ge\tfrac12\|x-y\|,
 \qquad x,y\in U.
\]
Thus \(G_h\) is injective on \(U\) with positive Jacobian, and no
nonzero orbit remaining in \(U\) reaches the origin in finite time. For
such an orbit in \(\mathcal B^+_{\rm loc}\), let \(\kappa\) be given by
Lemma~\ref{lem:angular-limit}.

Suppose first that \(\kappa>0\).  By \eqref{eq:radial-blowup},
\[
 r_{k+1}=r_k-3h\kappa\,r_k^2+o(r_k^2).
\]
Hence \(1/r_{k+1}-1/r_k\to3h\kappa\), so \(r_k\sim(3h\kappa\,k)^{-1}\) and
\(\sum_kr_k^2<\infty\). Write \(DG_h(y)=I+M(y)\) with
\(M(y)=-hK\operatorname{diag}(\sin y)|_E\). Then
\(\|M(y)\|=O(\|y\|_g)\) and
\[
 \operatorname{tr}_E M(y)
 =-h\operatorname{tr}(K\operatorname{diag}(\sin y))
 =-2h\sum_i\sin y_i
 =O(\|y\|_g^3),
\]
the cancellation of the linear term being the one behind
\eqref{eq:harmonic-cubic}. Since
\(\log\det(I+M)=\operatorname{tr}M+O(\|M\|^2)\), uniformly for
\(y\in U\),
\begin{equation*}
 \log\det DG_h(y)=O(\|y\|_g^2).
\end{equation*}
Hence the products
\(\mathcal J_k(y_0)=\det DG_h^k(y_0)\) converge to a number
\(\mathcal J_\infty(y_0)\in(0,\infty)\) along every orbit of this type.

Let \(\mathcal E_+\subset\mathcal B^+_{\rm loc}\) be the set of
initial points with \(\kappa>0\). If it had positive measure, some
\(\mathcal E_m=\{y\in\mathcal E_+:\mathcal J_\infty(y)\ge1/m\}\) would have positive
measure.
By Egorov's theorem, applied to \(\mathcal J_k\) and \(G_h^k\) on
\(\mathcal E_m\), both converge uniformly on a measurable subset
\(\mathcal E'\subset\mathcal E_m\) of positive measure; in particular
\(\mathcal J_k\ge1/(2m)\) on \(\mathcal E'\) for large \(k\). Since orbits stay
in \(U\), \(G_h^k\) is injective on \(\mathcal E'\), and the change of
variables formula gives
\[
 \operatorname{Leb}(G_h^k(\mathcal E'))
 =\int_{\mathcal E'} \mathcal J_k(y)\,dy
 \ge\frac{\operatorname{Leb}(\mathcal E')}{2m}>0
\]
for all large \(k\), whereas \(G_h^k(\mathcal E')\) lies in balls about
the origin with radii tending to zero. Hence \(\mathcal E_+\) is null.

It remains to exclude \(\kappa=0\). Then
\(\operatorname{dist}(u_k,\mathcal D_0)\to0\) by
Lemma~\ref{lem:angular-limit}, \(\mathcal D_0\) is finite by
Lemma~\ref{lem:zero-directions}, and \(d_g(u_{k+1},u_k)\to0\) by
\eqref{eq:angular-blowup}. Since the points of \(\mathcal D_0\) are at
positive distance from one another, the vanishing increments prevent
\((u_k)\) from switching between neighbourhoods of different points;
hence \(u_k\to u_0\) for a single \(u_0\in\mathcal D_0\).
Write \(u_{0,i}=a\varepsilon_i\), with \(a>0\) and
\(\varepsilon_i\in\{-1,1\}\).  The ray through \(u_0\) is
pointwise fixed by \(G_h\), since
\[
 G_h(ru_0)=ru_0+hK\bigl(\cos(ar)\ones\bigr)=ru_0.
\]

Choose \(g\)-normal angular coordinates \(\eta\) near \(u_0\),
so that the Hessian matrix
\(H_\phi=D^2_{S_g}\phi(u_0)\) is symmetric and nonsingular by
Lemma~\ref{lem:zero-directions}.  Denote the radial and angular
components of the polar map by \(\rho(r,\eta)\) and
\(\mathcal U(r,\eta)\).  They extend smoothly to \(r=0\), where
\(\rho(0,\eta)=0\) and \(\mathcal U(0,\eta)=\eta\).
The angular part fixes \(\eta=0\) for every small
\(r\), and \eqref{eq:angular-blowup} gives
\[
 \mathcal U(r,0)=0,\qquad
 D_\eta\mathcal U(r,0)=I-hrH_\phi+O(r^2),
\]
with \(D_\eta^2\mathcal U(r,\eta)=O(r)\), uniformly near
\((0,0)\).  Taylor's theorem therefore yields
\begin{equation*}
 \eta_{k+1}=\bigl[I-hr_kH_\phi+R_k\bigr]\eta_k,
 \qquad
 \|R_k\|\le C\bigl(r_k\|\eta_k\|+r_k^2\bigr)=o(r_k).
\end{equation*}
Apply Lemma~\ref{lem:slow-dichotomy} with \(a_k=hr_k\)
and \(H_*=H_\phi\).  Because \(\eta_k\to0\) and
\(\sum_k r_k=\infty\), there are \(C,\gamma>0\) such that
\begin{equation}
\label{eq:angular-exponential}
 \|\eta_k\|\le Ce^{-\gamma\tau_k},
 \qquad \tau_k=h\sum_{j<k}r_j.
\end{equation}

The fixed ray gives \(\rho(r,0)=r\); moreover
\(D_\eta\rho(r,0)=0\) for small \(r\). Indeed, let
\(v\in T_{u_0}S_g\). Since \(\sin(ru_0)=\sin(ar)\varepsilon\) and
\(G_h(ru_0)=ru_0\ne0\), the \(g\)-norm is differentiable at
\(G_h(ru_0)\), and differentiating \(\|G_h(ru)\|_g\) at \(u=u_0\)
gives
\[
 D_\eta\rho(r,0)v
 =r\langle u_0,v\rangle_g
  -hr\sin(ar)
   \langle u_0,K\operatorname{diag}(\varepsilon)v\rangle_g=0.
\]
The first inner product vanishes because \(v\) is tangent to \(S_g\),
and the second equals
\[
 \langle u_0,K\operatorname{diag}(\varepsilon)v\rangle_g
 =u_0^T\operatorname{diag}(\varepsilon)v
 =a\sum_i v_i=0,
\]
as \(v\in E\).  Writing \(G_h(ru)=ru+r^2Q(r,u)\) with \(Q\) smooth for small
\(r\ge0\), we get
\[
 \rho(r,\eta)=r\|u(\eta)+rQ(r,u(\eta))\|_g
             =r+r^2q(r,\eta),
\]
where \(q\) is smooth with uniformly bounded angular second
derivatives.  The two identities above give \(q(r,0)=0\) and \(D_\eta q(r,0)=0\),
so Taylor's theorem yields
\begin{equation}
\label{eq:radial-quadratic-angle}
 |\rho(r,\eta)-r|\le Cr^2\|\eta\|^2.
\end{equation}
Since $r_{k+1}=\rho(r_k,\eta_k)$, \eqref{eq:radial-quadratic-angle}
gives
\[
 \Bigl|\frac{r_{k+1}}{r_k}-1\Bigr|\le Cr_k\|\eta_k\|^2\longrightarrow0.
\]
Using $|\log(1+t)|\le2|t|$ for $|t|\le\frac12$ and
\eqref{eq:angular-exponential}, we obtain, for large $k$,
\[
 |\log r_{k+1}-\log r_k|\le 2Cr_k\|\eta_k\|^2
 \le C'r_ke^{-2\gamma\tau_k}.
\]
Since $\tau_{k+1}-\tau_k=hr_k\to0$ and $1-e^{-x}\ge x/2$ for
$0\le x\le1$, for large $k$
\[
 r_ke^{-2\gamma\tau_k}
 \le\frac{1}{h\gamma}
     \bigl(e^{-2\gamma\tau_k}-e^{-2\gamma\tau_{k+1}}\bigr),
\]
and the right-hand side telescopes. Hence
$\sum_k|\log r_{k+1}-\log r_k|<\infty$, so $\log r_k$ converges to a
finite limit, contradicting $r_k\to0$.

The mechanism is the following. On the invariant ray the radius is
preserved exactly; off the ray, the radial change is quadratic in the
angular deviation $\eta$; and $\eta$ decays exponentially in slow time.
At the zero level the cubic term provides no radial contraction, so
nothing can drive $r_k$ to zero. Hence no nonzero orbit in
$\mathcal B^+_{\rm loc}$ has $\kappa=0$, and the origin itself has zero
measure.
\end{proof}

\begin{proposition}[Completely degenerate basins]
\label{prop:degenerate-basins}
If \(4\mid n\), then under \eqref{eq:lyaprange} the basins
\(\mathcal B(p_+)\) and \(\mathcal B(p_-)\) have zero measure.
\end{proposition}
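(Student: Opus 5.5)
By the involution $z\mapsto -z$, which commutes with $\bar F_h$ and exchanges $p_+$ and $p_-$, and preserves $\mu$, it suffices to show that $\mathcal B(p_+)$ is null. The plan is to pass from the local statement, Proposition~\ref{prop:local-degenerate-basin}, to the global basin by pulling back along the dynamics, exactly as in the last step of the proof of Proposition~\ref{prop:strict-saddle-basins}.

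First, fix the small ball $U\subset E$ of Proposition~\ref{prop:local-degenerate-basin}. Let $\varpi$ be the covering projection restricted to a neighbourhood of $0$, and choose $U$ small enough that $y\mapsto\varpi(p_++y)$ is a diffeomorphism of $U$ onto a neighbourhood $\mathcal U$ of $p_+$ in $\mathcal Z$. On $U$ this conjugates $G_h$ to $\bar F_h$, in the sense that $\varpi(p_++G_h(y))=\bar F_h(\varpi(p_++y))$ whenever $y$ and $G_h(y)$ both lie in $U$. Next, let $z_0\in\mathcal B(p_+)$. Since $\bar F_h^{\,k}(z_0)\to p_+$, there is $m$ with $\bar F_h^{\,k}(z_0)\in\mathcal U$ for all $k\ge m$. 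The local coordinate $y_m$ of $\bar F_h^{\,m}(z_0)$ then has its whole forward $G_h$-orbit in $U$; here the conjugacy holds along the orbit because consecutive points stay in $\mathcal U$ and the local lift is single-valued on $U$. That orbit converges to $0$, so $y_m\in\mathcal B^+_{\rm loc}$. Hence
\[
 \mathcal B(p_+)\subset\bigcup_{m\ge0}\bar F_h^{-m}(N),
 \qquad N:=\varpi\bigl(p_++\mathcal B^+_{\rm loc}\bigr).
\]

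The set $N$ is null, because $\varpi$ is a local diffeomorphism and $\mathcal B^+_{\rm loc}$ is null by Proposition~\ref{prop:local-degenerate-basin}. Then Proposition~\ref{prop:null-preimages}, applied iteratively, shows that every $\bar F_h^{-m}(N)$ is null, since preimages of null sets are null for each single step and hence for all iterates. A countable union of null sets is null, which proves the claim. Note that this argument does not need invertibility of $\bar F_h$, so it covers the whole interval $0<h<1/2$, including $h\ge 1/4$.

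The main point requiring care is the first step: ensuring that an orbit of $\bar F_h$ that eventually stays in $\mathcal U$ corresponds to a $G_h$-orbit that stays in $U$. This can fail only through the choice of lift. It is handled by taking $\mathcal U$ to be the diffeomorphic image of $U$, so that the local lift is unique. One also needs $G_h(U)$ to land in a region where the conjugacy identity holds; this follows by shrinking $U$ so that $\|G_h(y)-y\|$ is less than the injectivity radius of $\varpi$. All the substantive difficulty lives in Proposition~\ref{prop:local-degenerate-basin}, which we take as given.
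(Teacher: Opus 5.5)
Your proposal is correct and follows the paper's own proof. You reduce to $p_+$ by the involution, observe that every orbit converging to $p_+$ enters $\mathcal B^+_{\rm loc}$ after finitely many iterates, and pull the resulting null set back with Proposition~\ref{prop:null-preimages}; your care with the single-valued local lift spells out a step the paper leaves implicit (only the choice of lift needs controlling, since $\bar F_h(p_++y)=p_++G_h(y)$ holds globally modulo $2\pi$ because $\sin(\pi/2+y)=\cos y$).
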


\begin{proof}
By Proposition~\ref{prop:local-degenerate-basin} and the involution
\(z\mapsto-z\), the local basins of \(p_\pm\) are null. An orbit
converging to \(p_\pm\) enters the local basin after finitely many
iterates, so Proposition~\ref{prop:null-preimages} gives the global
statement.
\end{proof}

\begin{theorem}[Almost-global convergence to uniform twists]
\label{thm:almost-global}
For every \(n\ge3\) and every \(h\) in \eqref{eq:lyaprange}, almost
every orbit of \(\bar F_h\) converges to a uniform twist that is a local
minimum of \(V\).
\end{theorem}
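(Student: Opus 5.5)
The proof assembles results already established; no new estimate is needed. Fix \(n\ge3\) and \(h\) in \eqref{eq:lyaprange}. By Proposition~\ref{prop:global}, every forward orbit of \(\bar F_h\) converges to a single critical point of \(V\). So \(\mathcal Z\) is the disjoint union of the basins \(\mathcal B(p)\), \(p\in\Crit(V)\). I will partition \(\Crit(V)\) into three classes: the local minima, the strict saddles \(\mathcal S\), and, when \(4\mid n\), the two points \(p_\pm\). It then suffices to show that the union of the basins of the second and third classes is null.

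Step one identifies the partition. By Theorem~\ref{thm:minima}, the local minima are exactly the uniform twists with \(c_q<0\). By Theorems~\ref{thm:classification} and~\ref{thm:4l}, every other isolated critical point has \(c_{k,r}\ne0\), is non-degenerate and has positive index. Hence its Hessian has a negative eigenvalue, and it lies in \(\mathcal S\).

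When \(4\mid n\), the critical circles minus \(p_\pm\) also lie in \(\mathcal S\). This follows from Proposition~\ref{prop:circle-hessian}, which gives \((n-2)/2\ge1\) negative eigenvalues because \(n\ge4\). Therefore
\[
 \Crit(V)=\{\text{minimal twists}\}\cup\mathcal S\cup\{p_+,p_-\},
\]
where the last set appears only when \(4\mid n\). This is the description of \(\mathcal S\) stated before Proposition~\ref{prop:strict-saddle-basins}.

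Step two bounds the bad basins. Proposition~\ref{prop:strict-saddle-basins} gives \(\mu(\mathcal B(\mathcal S))=0\) on the whole interval \eqref{eq:lyaprange}. When \(4\mid n\), \(n\) is even, so \eqref{eq:lyaprange} reads \(0<h<1/2\). Proposition~\ref{prop:degenerate-basins} then gives \(\mu(\mathcal B(p_\pm))=0\). The union of these finitely many null sets is null.

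The complement of this union therefore has full measure. By Proposition~\ref{prop:global}, every orbit starting there converges to a critical point that is a local minimum, that is, to a uniform twist with \(c_q<0\).

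The only point needing care is that no measurability or countability issue arises from uncountably many saddle basins. This is already handled inside Proposition~\ref{prop:strict-saddle-basins}, which treats \(\mathcal B(\mathcal S)\) as a single set covered by \(\bigcup_k\bar F_h^{-k}(N)\). The main obstacle, the degenerate points \(p_\pm\), is likewise fully absorbed by Proposition~\ref{prop:degenerate-basins}.

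Non-invertibility for \(h\ge1/\Lambda_n\) plays no role. Both null-set propositions rely only on Proposition~\ref{prop:null-preimages}, which holds for every \(h>0\).
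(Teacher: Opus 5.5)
Your proposal is correct and follows the paper's proof: convergence to a critical point comes from Proposition~\ref{prop:global}, the basins of the non-minimal critical points are null by Propositions~\ref{prop:strict-saddle-basins} and~\ref{prop:degenerate-basins}, and Theorem~\ref{thm:minima} identifies the minima as the twists with $c_q<0$. Your explicit check that every non-minimal critical point other than $p_\pm$ lies in $\mathcal S$ only spells out the description the paper states before Proposition~\ref{prop:strict-saddle-basins}.
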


\begin{proof}
Every orbit converges to a critical point
(Proposition~\ref{prop:global}); the basins of the non-minimal ones are
null (Proposition~\ref{prop:strict-saddle-basins} and, when
\(4\mid n\), Proposition~\ref{prop:degenerate-basins}); and the local minima are the uniform
twists with \(c_q<0\) (Theorem~\ref{thm:minima}).
\end{proof}

The theorem asserts that the basins of the locally minimising twists
together have full measure; it does not single out one twist. Since
\(z\mapsto -z\) commutes with \(\bar F_h\), preserves measure and
exchanges the twists \(q\) and \(n-q\), their basins have equal
measure.

For identical oscillators on unweighted simple graphs, the
harmonicity calculation \eqref{eq:harmonic-cubic} extends to the
completely degenerate equilibria of \cite{Sclosa2024}: the edge
Laplacian has diagonal entries two, and the linear term of the potential
vanishes at equilibrium. The cycle structure is used only at the zero
level, through the non-degenerate angular saddles and their pointwise
fixed rays; extending the blow-up argument to other graphs requires an
analysis of the zero-level directions there.

\FloatBarrier
\section{Small rings: \texorpdfstring{$C_3$ to $C_8$}{C3 to C8}}
\label{sec:examples}

Throughout this section \(h\) lies in the Lyapunov interval
\eqref{eq:lyaprange}. The examples show the first additional twisted
minima and, for \(C_4\) and \(C_8\), the critical circles.

\subsection{The triangle and square: phase opposition and critical circles}
\label{subsec:C4}

The triangle $C_3$ is also the complete graph $K_3$, so nearest-neighbour
and all-to-all coupling coincide.  Its six critical points have
Morse index counts $(2,3,1)$ for indices $0,1,2$.  The two minima have
uniform edge differences $2\pi/3$ and $4\pi/3$.  With $\theta_3=0$, their
relative phases are $(\theta_1,\theta_2)=(2\pi/3,4\pi/3)$ and
$(4\pi/3,2\pi/3)$, as in the original three-oscillator analysis
\cite{DAnielloOliveira2024}. The involution $z\mapsto-z$ exchanges their
basins, which by Theorem~\ref{thm:almost-global} have $\mu$-measure
$1/2$ each.
The configurations are shown in Figure~\ref{fig:c3-final-states}.

\begin{figure}[!htbp]
\centering
\includegraphics[width=0.66\textwidth]{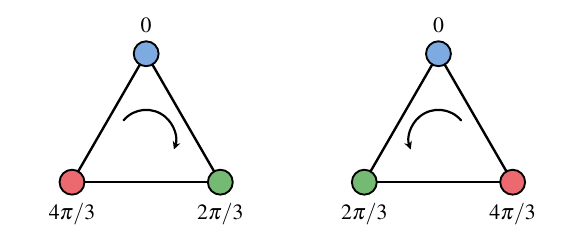}
\caption{The two uniform twisted minima of $C_3$, with edge differences
$2\pi/3$ and $4\pi/3$. The top oscillator has phase zero, and the
arrows indicate the opposite phase orientations. Here and in
Figures~\ref{fig:C4}--\ref{fig:C8}, the labels are vertex phases
$\theta_i$; the edge differences are their successive differences
around the ring.}
\label{fig:c3-final-states}
\end{figure}

The square $C_4$ has two isolated critical points: the maximum
$z=0$ with $V=4$, and the unique minimum $z=\pi\ones$ with $V=-4$.
Three critical circles, on which $V=0$, meet at $p_\pm$. Their
geometry is described in \cite[Section~5.2]{Sclosa2023}; see also
\cite[Example~5.4]{ChenKorchevskaiaLindberg2026}. The circles and the
minimum are shown in Figures~\ref{fig:C4criticalcircles}
and~\ref{fig:C4}.

\begin{figure}[!htbp]
\centering
\includegraphics[width=0.57\textwidth]{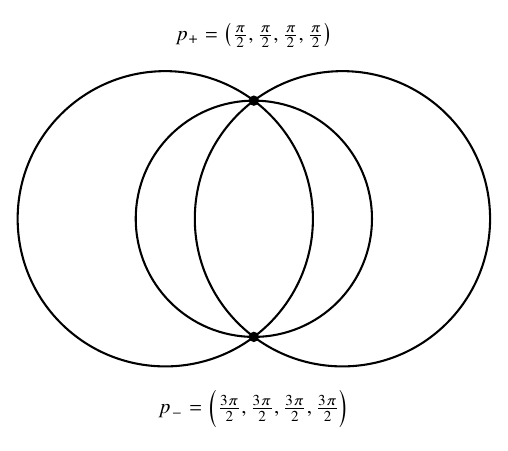}
\caption{The three critical circles of $C_4$, shown schematically.
All three intersect at the two completely degenerate configurations
$p_+$ and $p_-$, shown in edge coordinates; see
\cite[Section~5.2]{Sclosa2023}.}
\label{fig:C4criticalcircles}
\end{figure}

\begin{figure}[!htbp]
\centering
\includegraphics[width=0.43\textwidth]{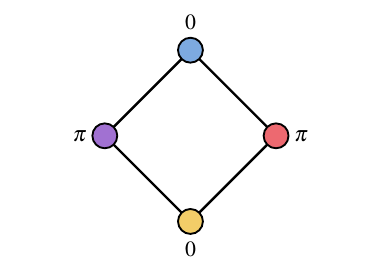}
\caption{The unique minimum of $C_4$: phase opposition on each edge,
with alternating vertex phases $0,\pi,0,\pi$.}
\label{fig:C4}
\end{figure}

\FloatBarrier

\subsection{The pentagon and hexagon: the first additional minima}
\label{subsec:C6}

Both $C_5$ and $C_6$ have finite, non-degenerate critical sets, with
Morse index counts, starting at index zero,
\begin{align*}
 C_5:&\quad (2,5,10,10,3),\qquad &&\#\Crit(V)=30,\\
 C_6:&\quad (3,12,15,15,12,3), &&\#\Crit(V)=60.
\end{align*}
Table~\ref{tab:c5} gives the critical values for $C_5$.

\begin{table}[htbp]
\centering
\caption{Critical configurations of $C_5$.  There are $k$ edges with
difference $\pi-\alpha$ and $5-k$ with difference $\alpha$, all modulo
$2\pi$.  Each count includes all choices of the $k$ positions and both
signs where indicated.}
\label{tab:c5}
\begin{tabular}{@{}ccccc@{}}
\toprule
$k$ & $\alpha$ & Number & $V$ & Morse index\\
\midrule
$0$ & $\pm4\pi/5$ & $2$ & $-\frac54(1+\sqrt5)$ & $0$\\
$1$ & $\pi$ & $5$ & $-3$ & $1$\\
$2$ & $0$ & $10$ & $1$ & $2$\\
$1$ & $\pm\pi/3$ & $10$ & $3/2$ & $3$\\
$0$ & $\pm2\pi/5$ & $2$ & $\frac54(\sqrt5-1)$ & $4$\\
$0$ & $0$ & $1$ & $5$ & $4$\\
\bottomrule
\end{tabular}
\end{table}

The two minima of $C_5$ have uniform edge differences $4\pi/5$
and $6\pi/5$, and, as for $C_3$, their basins have measure $1/2$ each
(Figure~\ref{fig:c5-final-states}).

\begin{figure}[!htbp]
\centering
\includegraphics[width=0.66\textwidth]{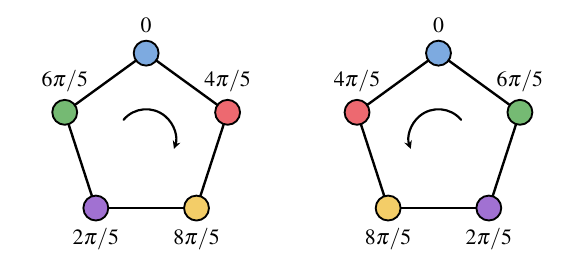}
\caption{The two uniform twisted minima of $C_5$, with edge differences
$4\pi/5$ and $6\pi/5$, and opposite phase orientations.}
\label{fig:c5-final-states}
\end{figure}
\FloatBarrier

The hexagon is the smallest even ring with additional local minima.  Its
three minima have uniform edge differences
\begin{equation*}
 z_i=\frac{2\pi}{3},\qquad z_i=\pi,\qquad z_i=\frac{4\pi}{3}
 \quad\text{for all }i.
\end{equation*}
Phase opposition has potential $-6$; the other two minima have
potential $-3$ and basins of equal measure (Figure~\ref{fig:C6}).

\begin{figure}[!htbp]
\centering
\includegraphics[width=0.94\textwidth]{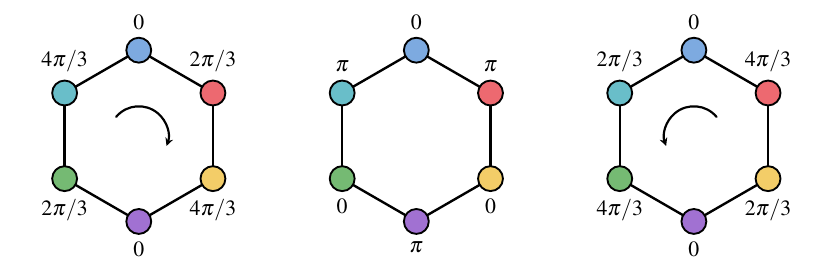}
\caption{The three minima of $C_6$. The left and right configurations
have uniform edge differences $2\pi/3$ and $4\pi/3$, respectively,
and potential $-3$. The central configuration is the unique absolute
minimum, with phase opposition and potential $-6$.}
\label{fig:C6}
\end{figure}

\FloatBarrier
\subsection{Seven and eight oscillators: additional minima and degeneracy}
\label{subsec:C8}

The ring $C_7$ is the first odd ring with minima above the absolute
minimum value.  Among its $140$ critical points, four are minima, with
uniform edge differences
\[
 \frac{4\pi}{7},\quad\frac{6\pi}{7},\quad
 \frac{8\pi}{7},\quad\frac{10\pi}{7}.
\]
The middle pair are the absolute minima and the outer pair the
additional ones (Figure~\ref{fig:c7-final-states}).

\begin{figure}[htbp]
\centering
\includegraphics[width=0.78\textwidth]{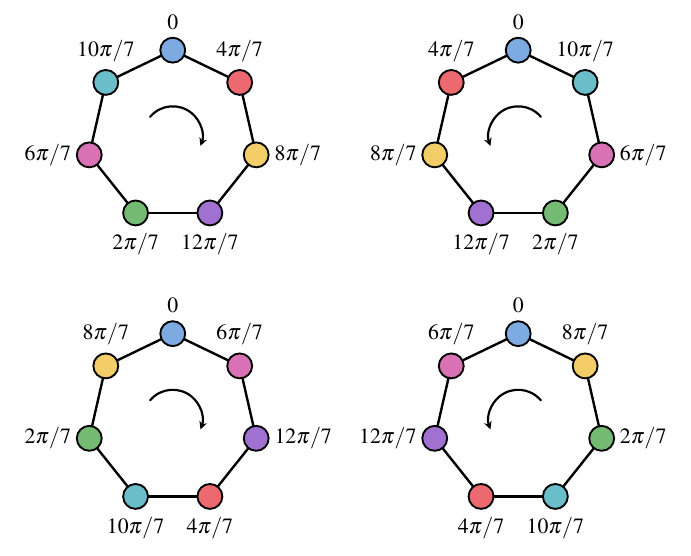}
\caption{The four minima of $C_7$. The top row shows the additional
minima, with uniform edge differences $4\pi/7$ and $10\pi/7$
from left to right. The bottom row shows the absolute minima,
with differences $6\pi/7$ and $8\pi/7$ from left to right.}
\label{fig:c7-final-states}
\end{figure}

The ring $C_8$ has $94$ isolated critical points and $35$ critical
circles meeting at $p_\pm$. Its three minima have uniform edge differences
\begin{equation*}
 z_i=\frac{3\pi}{4},\qquad z_i=\pi,\qquad z_i=\frac{5\pi}{4}
 \quad\text{for all }i,
\end{equation*}
with potential values $-4\sqrt2$, $-8$ and $-4\sqrt2$
(Figure~\ref{fig:C8}).

\begin{figure}[htbp]
\centering
\includegraphics[width=0.72\textwidth]{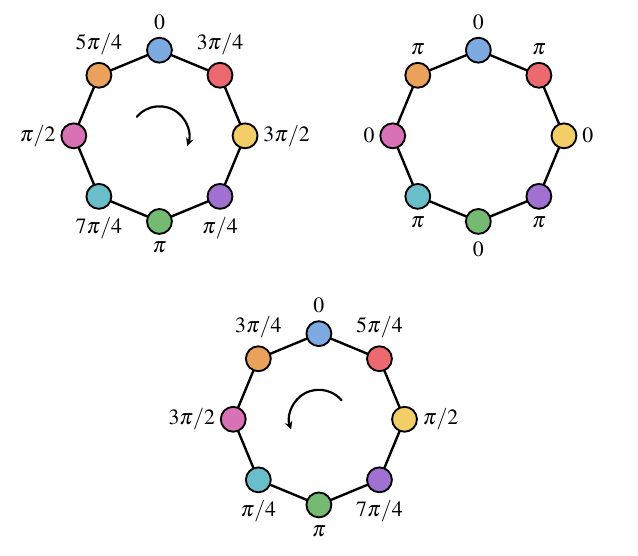}
\caption{The three minima of $C_8$. The configurations at the top left
and bottom have uniform edge differences $3\pi/4$ and $5\pi/4$,
respectively. The configuration at the top right is the
phase-opposition state and the unique absolute minimum.}
\label{fig:C8}
\end{figure}

Table~\ref{tab:growth} gives the counts for larger rings.

\begin{table}[htbp]
\centering
\caption{Critical sets and sinks for $0<h<2/\Lambda_{n}$. Isolated
points are non-degenerate; the two degenerate intersections of the
critical circles are not counted. Additional sinks are the minima other
than the absolute minima.}
\label{tab:growth}
\vspace{0.55em}
\begin{tabular}{@{}lrrrr@{}}
\toprule
Ring & Isolated points & Critical circles & Sinks & Additional sinks\\
\midrule
$C_3$    & $6$     & $0$   & $2$ & $0$\\
$C_4$    & $2$     & $3$   & $1$ & $0$\\
$C_5$    & $30$    & $0$   & $2$ & $0$\\
$C_6$    & $60$    & $0$   & $3$ & $2$\\
$C_7$    & $140$   & $0$   & $4$ & $2$\\
$C_8$    & $94$    & $35$  & $3$ & $2$\\
$C_9$    & $630$   & $0$   & $4$ & $2$\\
$C_{10}$ & $1260$  & $0$   & $5$ & $4$\\
$C_{11}$ & $2772$  & $0$   & $6$ & $4$\\
$C_{12}$ & $2372$  & $462$ & $5$ & $4$\\
$C_{13}$ & $12012$ & $0$   & $6$ & $4$\\
$C_{14}$ & $24024$ & $0$   & $7$ & $6$\\
\bottomrule
\end{tabular}
\end{table}
\FloatBarrier

\section{Conclusions}
\label{sec:discussion}

For every ring size and every interaction strength in the Lyapunov
interval, almost every orbit of the discrete Huygens map converges to a
uniform twisted state. The limit is not unique: for \(n\ge6\) there are
attracting twists other than the absolute minima of the potential, so
the initial phases determine which twist is reached, and convergence
does not select the state of minimal potential. When \(4\mid n\), the
critical set contains circles of critical points, but both the circles
and their completely degenerate intersections have basins of zero
measure. Where an oscillator ring must reach a prescribed phase
pattern, this multistability makes the choice of initial phases part of
the design.

Two questions remain open. On odd rings, the absolute minima remain
sinks up to the threshold \eqref{eq:odd-stability-gap}, beyond the
Lyapunov interval, and it is not known whether almost-global
convergence extends to this range. For larger interaction strengths the
uniform twists lose stability, and the resulting bifurcations are not
treated here.

\appendix
\section{A slow-time dichotomy}
\label{app:slow-dichotomy}

All norms and matrices in this appendix are Euclidean.

Let \(H_*\) be a symmetric nonsingular matrix.  Let \((a_k)\) be a
positive sequence with \(a_k\to0\) and \(\sum_ka_k=\infty\), and let
\((R_k)\) satisfy \(\|R_k\|=o(a_k)\).  Consider
\begin{equation}
\label{eq:slow-dichotomy-system}
 x_{k+1}=(I-a_kH_*+R_k)x_k,
 \qquad
 \sigma_k:=\sum_{j<k}a_j.
\end{equation}

\begin{lemma}[Slow-time dichotomy]
\label{lem:slow-dichotomy}
Every solution of \eqref{eq:slow-dichotomy-system} that converges to zero
satisfies
\begin{equation}
\label{eq:slow-dichotomy-decay}
 \|x_k\|\le Ce^{-\gamma\sigma_k}
\end{equation}
for some \(C,\gamma>0\).
\end{lemma}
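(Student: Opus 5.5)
We diagonalise \(H_*\) by an orthogonal change of variables, which preserves the Euclidean norms and the hypothesis \(\|R_k\|=o(a_k)\). So we may assume \(H_*=\operatorname{diag}(H_s,H_u)\), with \(H_s\) positive definite and \(H_u\) negative definite. Let \(\beta>0\) be the smallest modulus of an eigenvalue of \(H_*\), and write \(x_k=(s_k,v_k)\). For large \(k\), \(a_k\) is small, and the unperturbed blocks satisfy
\[
 \|I-a_kH_s\|\le1-\beta a_k,
 \qquad
 \|(I-a_kH_u)w\|\ge(1+\beta a_k)\|w\|.
\]
Fix \(\epsilon=\beta/4\) and take \(k_0\) such that \(\|R_k\|\le\epsilon a_k\) for \(k\ge k_0\). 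The perturbation then changes each block by at most \(\epsilon a_k\|x_k\|\).

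The key step is a cone argument for the expanding block. Consider the cone \(\mathcal K=\{\|v\|\ge\|s\|\}\). For \(k\ge k_0\) and \(x_k\in\mathcal K\), we have \(\|x_k\|\le2\|v_k\|\), and so
\[
 \|v_{k+1}\|\ge(1+\beta a_k-2\epsilon a_k)\|v_k\|,
 \qquad
 \|s_{k+1}\|\le(1-\beta a_k+2\epsilon a_k)\|s_k\|\le\|v_{k+1}\|.
\]
Hence \(\mathcal K\) is forward invariant. On \(\mathcal K\), \(\|v_k\|\) grows at least like \(\exp(c\sum a_j)\) for some \(c>0\), using \(\log(1+t)\ge t/2\) for small \(t\). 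Since \(\sum a_j=\infty\), this growth forces \(\|v_k\|\to\infty\), contradicting \(x_k\to0\). A solution converging to zero is therefore never in \(\mathcal K\) for \(k\ge k_0\), and \(\|v_k\|<\|s_k\|\) holds throughout.

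Once the solution is outside \(\mathcal K\), we have \(\|x_k\|\le\sqrt2\,\|s_k\|\). The stable-block estimate then gives
\[
 \|s_{k+1}\|\le(1-\beta a_k+\sqrt2\,\epsilon a_k)\|s_k\|\le e^{-\gamma a_k}\|s_k\|,
\]
with \(\gamma=\beta/2\), say. Iterating from \(k_0\) gives \(\|s_k\|\le\|s_{k_0}\|e^{-\gamma(\sigma_k-\sigma_{k_0})}\), and hence \(\|x_k\|\le Ce^{-\gamma\sigma_k}\) for \(k\ge k_0\). Enlarging \(C\) covers the finitely many indices \(k<k_0\).

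The main obstacle is the invariance of the cone, since the constants must be handled uniformly in the variable step size \(a_k\). The point to verify is that the perturbation contributes only \(O(\epsilon a_k)\) to both blocks, so that the gap \(\beta a_k\) dominates. If \(H_*\) has no negative eigenvalues, the cone step is vacuous and only the final estimate is needed; if it has no positive eigenvalues, the cone is all of the space and no nonzero solution converges to zero. In both cases the conclusion holds, trivially in the second, where the only convergent solution is the zero solution.
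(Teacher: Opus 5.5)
Your argument is correct, and it takes a genuinely different route from the paper. There is one slip in a displayed inequality, which does not affect the conclusion.

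\textbf{Comparison of routes.} The paper proves the lemma by a Lyapunov--Perron (Green operator) argument:
\begin{itemize}
\item it writes the stable component by variation of constants forward from $k_0$;
\item it writes the unstable component as an infinite backward sum, which is legitimate because $P_ux_k\to0$;
\item it shows that this Green operator is a contraction both on $\ell^\infty$ and on the weighted space $X_\gamma$;
\item it concludes that the given bounded solution, being the unique bounded fixed point, lies in $X_\gamma$.
\end{itemize}
You use only one-step norm estimates instead. First, a forward-invariant cone around the expanding block, which a nonzero solution cannot enter because $\sum a_k=\infty$ would force $\|v_k\|\to\infty$. Second, a direct one-step contraction of the stable block outside the cone.

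Your route is shorter and more elementary. It avoids the exponential bounds on the products $\Phi_s,\Phi_u$, the summation estimates and the uniqueness argument. By shrinking $\epsilon$ it gives any rate $\gamma<\beta$, as the paper's does. What the paper's route buys is a characterisation of all convergent solutions as fixed points of a contraction, in effect a stable-manifold statement. Your argument only bounds the decay of a given convergent solution, which is all the lemma needs.

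\textbf{The slip.} On $\mathcal K$ you can only bound $\|x_k\|\le2\|v_k\|$, not $\|x_k\|\le2\|s_k\|$. So the middle term of your $s$-estimate must be $(1-\beta a_k+2\epsilon a_k)\|v_k\|$. The version with $\|s_k\|$ fails when $s_k=0$ and $v_k\neq0$, since $s_{k+1}=P_sR_kx_k$ need not vanish. With the correction, use $\|s_k\|\le\|v_k\|$ and $\epsilon=\beta/4$: you get $\|s_{k+1}\|\le(1-\tfrac12\beta a_k)\|v_k\|\le(1+\tfrac12\beta a_k)\|v_k\|\le\|v_{k+1}\|$, so cone invariance survives.

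\textbf{The zero vector.} Since $0\in\mathcal K$, your dichotomy should read: either $x_k=0$ from some index on, which gives the bound trivially after enlarging $C$, or $\|v_k\|<\|s_k\|$ for all $k\ge k_0$.
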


\begin{proof}
After an orthogonal change of coordinates, write
\(
H_*=\operatorname{diag}(H_s,-H_u),
\)
where \(H_s\) and \(H_u\) are positive definite whenever their
blocks are non-empty.  Let \(P_s\) and \(P_u\) denote the corresponding
projections, and omit the respective terms if a block is absent.
Choose $\lambda>0$ below the least eigenvalue among the non-empty
positive blocks. For $j\le k$ and $j\ge k$, respectively, set
\[
 \Phi_s(k,j)=\prod_{\ell=j}^{k-1}(I-a_\ell H_s),\qquad
 \Phi_u(k,j)=\prod_{\ell=k}^{j-1}(I+a_\ell H_u)^{-1},
\]
with empty products equal to $I$ and products taken in increasing
index order. For sufficiently large indices these operators satisfy
\[
 \|\Phi_s(k,j)\|\le C e^{-\lambda(\sigma_k-\sigma_j)},
 \qquad j\le k,
\]
and the inverse products on the unstable block satisfy
\[
 \|\Phi_u(k,j)\|\le C e^{-\lambda(\sigma_j-\sigma_k)},
 \qquad j\ge k.
\]

Let \(x_k\to0\).  Variation of constants gives, for sufficiently large
\(k_0\),
\begin{align*}
 P_sx_k
 &=\Phi_s(k,k_0)P_sx_{k_0}
   +\sum_{j=k_0}^{k-1}\Phi_s(k,j+1)P_sR_jx_j,\\
 P_ux_k
 &=-\sum_{j=k}^{\infty}\Phi_u(k,j+1)P_uR_jx_j.
\end{align*}
The second identity follows by writing the formula up to a finite
terminal index and letting that index tend to infinity, using
\(P_ux_k\to0\). Thus every convergent solution is a fixed point of the
associated Green operator on bounded sequences.

Fix \(0<\gamma<\lambda\), and increase \(k_0\) so that
\(\|R_j\|\le\varepsilon a_j\) for \(j\ge k_0\), with
\(\varepsilon>0\) arbitrarily small.  For every \(\beta>0\),
writing \(A_0=\sup_{j\ge k_0}a_j<\infty\), comparison with the
integral of an exponential over each
\([\sigma_j,\sigma_{j+1}]\) gives the two bounds
\begin{align*}
 \sup_{k\ge k_0}\sum_{j=k_0}^{k-1}
  a_j e^{-\beta(\sigma_k-\sigma_{j+1})}
 &\le \frac{e^{\beta A_0}}{\beta},\\
 \sup_{k\ge k_0}\sum_{j=k}^{\infty}
  a_j e^{-\beta(\sigma_j-\sigma_k)}
 &\le \frac{e^{\beta A_0}}{\beta}.
\end{align*}
With $\beta=\lambda$, these bounds show that the Green operator has
norm $O(\varepsilon)$ on $\ell^\infty$. After multiplication by
\(e^{\gamma\sigma_k}\) the stable and unstable exponents become
\(\lambda-\gamma>0\) and \(\lambda+\gamma>0\), so the same holds on
\[
 X_\gamma=
 \{x:\sup_{k\ge k_0}e^{\gamma\sigma_k}\|x_k\|<\infty\}.
\]
Choosing \(\varepsilon\) sufficiently small makes both operators
strict contractions.
Since the inhomogeneous term $\Phi_s(k,k_0)P_sx_{k_0}$ belongs to
$X_\gamma$, the Green equation has a unique fixed point in $X_\gamma$.
As $X_\gamma\subset\ell^\infty$, this is also the unique bounded fixed
point, which is the given solution. Thus
\(
\sup_{k\ge k_0}e^{\gamma\sigma_k}\|x_k\|<\infty.
\)
This is \eqref{eq:slow-dichotomy-decay}; enlarging the constant absorbs
the finite initial segment.
\end{proof}

\section*{Declarations}

\noindent\textbf{Data availability.} No datasets were generated or analysed during the preparation of this article. All proofs and calculations are presented in the paper.

\medskip
\noindent\textbf{Author contributions.} Both authors contributed equally to all stages of the work. Both authors reviewed and approved the final manuscript.

\medskip
\noindent\textbf{Funding.} Jorge Buescu acknowledges partial support from Funda\c{c}\~ao para a Ci\^encia e a Tecnologia (FCT), Portugal, through project UID/04561/2025 (DOI: 10.54499/\allowbreak UID/04561/2025). Henrique M. Oliveira acknowledges partial support from FCT, Portugal, through CAMGSD -- Centro de An\'alise Matem\'atica, Geometria e Sistemas Din\^amicos, under projects UID/04459/2025 (DOI: 10.54499/\allowbreak UID/04459/2025) and UID/PRR/04459/2025 (DOI: 10.54499/\allowbreak UID/PRR/04459/2025), financed by national funds and by the European Union through the Recovery and Resilience Plan (PRR) -- NextGenerationEU.

\medskip
\noindent\textbf{Competing interests.} The authors declare that they have no competing financial or non-financial interests that could have influenced the work reported in this manuscript.

\medskip
\noindent\textbf{Use of artificial intelligence.}
Generative AI was used for language editing, editorial assistance,
and during revision to assist in checking the mathematical
exposition and arguments. All mathematical arguments, proofs, results
and final wording were defined by the authors.

\begingroup
\small

\bibliographystyle{plain}
\bibliography{rings}
\endgroup
\end{document}